\newtheorem{theorem}{Theorem}[section]
\newtheorem{corollary}[theorem]{Corollary}
\theoremstyle{definition}
\newtheorem{remark}[theorem]{Remark}
\newtheorem{example}[theorem]{Example}
\newtheorem{problem}{Problem}
\numberwithin{equation}{section}
\DeclareMathOperator{\lcm}{lcm}
\newcommand{\Mod}[1]{\ (\mathrm{mod}\ #1)}
\begin{document}

\title[Sums of divisors on arithmetic progressions]{Sums of divisors on arithmetic progressions}

\author[P. Pongsriiam]{Prapanpong Pongsriiam}
\newcommand{\acr}{\newline\indent}
\address{Department of Mathematics\acr
Faculty of Science, Silpakorn University\acr
Nakhon Pathom \acr
73000\acr
THAILAND}
\email{prapanpong@gmail.com, pongsriiam\_p@silpakorn.edu}

\subjclass[2020]{Primary 11A25; Secondary 11B25, 11N64}
\keywords{divisor function; arithmetic progression; local behavior; arithmetic function; inequality}

\begin{abstract}
For each $s\in \mathbb R$ and $n\in \mathbb N$, let $\sigma_s(n) = \sum_{d\mid n}d^s$. In this article, we give a comparison between $\sigma_s(an+b)$ and $\sigma_s(cn+d)$ where $a$, $b$, $c$, $d$, $s$ are fixed, the vectors $(a,b)$ and $(c,d)$ are linearly independent over $\mathbb Q$, and $n$ runs over all positive integers. For example, if $|s|\leq 1$, $a, b, c, d\in \mathbb N$ are fixed and satisfy certain natural conditions, then 
$$
\sigma_s(an+b) < \sigma_s(cn+d)\quad\text{ for all $n\leq M$}
$$
where $M$ may be arbitrarily large, but in fact $\sigma_s(an+b) - \sigma_s(cn+d)$ has infinitely many sign changes. The results are entirely different when $|s|>1$, where the following three cases may occur: 
\begin{itemize}
\item[(i)] $\sigma_s(an+b) < \sigma_s(cn+d)$ for all $n\in \mathbb N$; 
\item[(ii)] $\sigma_s(an+b) < \sigma_s(cn+d)$ for all $n\leq M$ and $\sigma_s(an+b) > \sigma_s(cn+d)$ for all $n\geq M+1$; 
\item[(iii)] $\sigma_s(an+b) - \sigma_s(cn+d)$ has infinitely many sign changes.
\end{itemize}
We also give several examples and propose some problems.
\end{abstract}

\maketitle

\section{Introduction}

Perhaps, the most interesting inequalities in number theory are those between primes and functions whose values depend on primes. For example, functions $f$ and $g$ may satisfy $f(x)<g(x)$ on a large interval $[1,M]$, but in fact $f(x)-g(x)$ has infinitely many sign changes as $x\to\infty$; and the smallest number $x_0$ for which $f(x_0)>g(x_0)$ exists in theory but is very difficult to find explicitly. Although it might be better to focus only on those $f$ and $g$ which are asymptotic and satisfy some kinds of Chebyshev's bias phenomenon, we do not require these conditions here. In fact, we are interested in the inequalities between $f(x)$ and $g(x)$ and the number of sign changes in $f(x)-g(x)$ as $x\to\infty$. 

In this article, we will be concerned with the comparison between the values of the function $\sigma_s$ over arithmetic progressions. Throughout, $s$ is a real number, $m$ and $n$ are positive integers, $p$ is a prime, $p_n$ is the $n$th prime, $\varphi(n)$ is the number of $m\leq n$ with $\gcd(m,n)=1$, $\sigma_s(n) = \sum_{d\mid n}d^s$, $\tau(n) = \sigma_0(n)$, $\sigma(n) = \sigma_1(n)$, and $\zeta$ is the Riemann zeta function which is given by $\zeta(s) = \sum_{n=1}^\infty n^{-s}$ for $s>1$. We sometimes simply write $(m,n)$ to denote $\gcd(m,n)$.

Jarden \cite[p.\ 65]{Jar} observed that $\varphi(30n+1) > \varphi(30n)$ for all $n\leq 10^5$ and further computations will show us that this inequality continues to hold for all $n$ up to a billion. However, contrary to the numerical evidence, Newman \cite{New} proved that there are infinitely many $n$ such that $\varphi(30n+1) < \varphi(30n)$; the smallest such $n$ for which this inequality holds has $1116$ digits and was given explicitly by Martin \cite{Mart}, namely, $n = (z-1)/30$ where $z = \left(\prod_{j=4}^{383}p_j\right)p_{385}p_{388}$. Aldaz, Bravo, Guti\'errez, and Ubis \cite{ABGU} extended Newman's theorem to the following form: if $a$, $b$, $c$, $d$ are nonnegative integers, $a, c>0$, and $ad-bc\neq 0$, then 
$$
\liminf_{n\to\infty}\frac{\varphi(an+b)}{\varphi(cn+d)} = 0\quad\text{and}\quad \limsup_{n\to\infty}\frac{\varphi(an+b)}{\varphi(cn+d)} = \infty.
$$
The assumption $ad-bc\neq 0$ is necessary because they \cite{ABGU} also showed that if $ad-bc=0$, then $R\leq \varphi(an+b)/\varphi(cn+d)\leq M$ for some positive real numbers $R$ and $M$, and so the limits infimum and supremum are not $0$ and $\infty$, respectively. This gives a complete picture for the comparison between $\varphi(an+b)$ and $\varphi(cn+d)$.

However, as far as we are aware, this kind of investigation has not been done for $\sigma_s$. We find a similar (but reverse) inequality, namely, 
$$
\sigma(30n+1) < \sigma(30n)\quad\text{for all $n\leq 10^7$,}
$$
yet there are infinitely many $n$ for which $\sigma(30n+1) > \sigma(30n)$. In general, if $-1\leq s\leq 1$, then the results for $\sigma_s$ are similar to those of $\varphi$ (see Theorem \ref{sumofdfthm203}, Corollary \ref{sumofdfthm204}, and Examples \ref{example3.7race} and \ref{example3.9race}). Nevertheless, if $|s| > 1$, then the answers are completely different. First of all, we show in Theorem \ref{sumofdfthm2} that $\sigma_s(an+b)/\sigma_s(cn+d)$ is bounded away from $0$ and $\infty$. Secondly, comparing the size of $\sigma_s(an+b)$ and $\sigma_s(cn+d)$, the following three inequality examples may occur: 
\begin{itemize}
\item[(IE1)] Always win or always lose: $\sigma_s(an+b) < \sigma_s(cn+d)$ for all $n\in \mathbb N$ (see Theorems \ref{newthmB}, \ref{newthmB2.10.5} and \ref{sumofdfthm3.8}, and Examples \ref{sumofdfthm200exam3.2} and \ref{example3.9race}).
\item[(IE2)] Change signs exactly once: $\sigma_s(an+b) < \sigma_s(cn+d)$ for an arbitrarily long string of consecutive integers $n = 1, 2, \ldots, M$, and $\sigma_s(an+b) > \sigma_s(cn+d)$ for all $n\geq M+1$ (see Theorem \ref{sumofdfthmA} and Example \ref{sumofdfexamA4}).
\item[(IE3)] Change signs infinitely often: 
$$
\sigma_s(an+b) < \sigma_s(cn+d)\;\text{ and}\;\sigma_s(am+b) > \sigma_s(cm+d)
$$
 for infinitely many $m$ and $n$ (see Theorems \ref{newthmA} and \ref{newthmC}).
\end{itemize}  
The inequalities in (IE1), (IE2), and (IE3) may be considered as the $0$-type, $1$-type, and $\infty$-type inequalities according to the number of sign changes in $\sigma_s(an+b) - \sigma_s(cn+d)$ as $a$, $b$, $c$, $d$ are fixed and $n$ runs over all positive integers. We do not know whether or not there exists an example for a $k$-type inequality for each positive integer $k>1$ (see also Problem \ref{problem12sign}).

By changing the role of $a$, $b$, $c$, $d$, we can replace the signs $<$ by $>$ and $>$ by $<$ in (IE1), (IE2), and (IE3). In addition, to  avoid triviality or unnecessary complications, we focus our study on the cases where $a$, $b$, $c$, $d$ are nonnegative integers, $a, c>0$, and $ad-bc\neq 0$. Note that the assumption $ad-bc \neq 0$ is equivalent to $(a,b)$ and $(c,d)$ are linearly independent over $\mathbb Q$; for if $ad-bc = 0$, then $\frac1a(a,b) - \frac1c(c,d) = (0,0)$ and if $(a,b) = (q_1/q_2)(c,d)$ for some $q_1\in \mathbb Z$, $q_2\in \mathbb N$, then $ad-bc = \left(q_1c/q_2\right)d - \left(q_1d/q_2\right)c = 0$.

In the next section, we prove the main results and show several examples. We also propose some possible research problems and give some related references. We do not plan to solve these problems soon; we will do it in the future but we do not mind if the reader will solve them. 

\section{Main Results}\label{Main results}
 
 Throughout, let $a$, $b$, $c$, $d$ be nonnegative integers with $a, c>0$. We first deal with the case $ad=bc$ in Theorem \ref{sumofdfthm200} and Example \ref{sumofdfthm200exam3.2}. After that, we restrict ourselves to the case $ad\neq bc$ and give general criteria and several examples for (IE1), (IE2), and (IE3) introduced earlier. Recall that the function $\sigma_s$ is multiplicative for every $s\in \mathbb R$, and observe that if $ad = bc$, then there are positive integers $r_1$ and $r_2$ such that $r_1(a,b) = r_2(c,d)$. We have the following result.
\subsection{The case $ad = bc$ and $s\in \mathbb R$}
\begin{theorem}\label{sumofdfthm200}
Let $s\in \mathbb R$ and let $a$, $b$, $c$, $d$ be nonnegative integers, $a, c>0$, and $ad = bc$. Let $r_1$ and $r_2$ be positive integers such that $r_1(a,b) = r_2(c,d)$. Then for every $n\geq 1$, 
$$
\frac{r_2^s}{\sigma_s(r_1)} \leq \frac{\sigma_s(an+b)}{\sigma_s(cn+d)} \leq \frac{\sigma_s(r_2)}{r_1^s}.
$$
\end{theorem}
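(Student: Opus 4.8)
The plan is to reduce everything to a single elementary two-sided estimate for $\sigma_s$ of a product, and then to exploit the relation $r_1(an+b) = r_2(cn+d)$ that the hypothesis $r_1(a,b) = r_2(c,d)$ forces. First I would record the auxiliary inequality: for any positive integers $u$, $v$ and any $s \in \mathbb R$,
$$
v^s\,\sigma_s(u) \le \sigma_s(uv) \le \sigma_s(u)\sigma_s(v).
$$
The left-hand bound is immediate, since $\{vt : t \mid u\}$ is a set of distinct divisors of $uv$, whence $\sigma_s(uv) = \sum_{e \mid uv} e^s \ge \sum_{t \mid u}(vt)^s = v^s\sigma_s(u)$; here one uses only that every term $e^s$ is positive, so no sign issue arises even when $s<0$. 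For the right-hand bound I would observe that the multiplication map $(d_1,d_2)\mapsto d_1 d_2$ from $\{(d_1,d_2): d_1\mid u,\ d_2\mid v\}$ onto the divisors of $uv$ is surjective: given $e\mid uv$, take $d_1 = \prod_p p^{\min(v_p(e),\,v_p(u))}$ and $d_2 = e/d_1$, where $v_p$ denotes the $p$-adic valuation, so that $d_1\mid u$ and $d_2\mid v$. Selecting one preimage per divisor and again using positivity of all terms gives $\sigma_s(u)\sigma_s(v) = \sum_{d_1,d_2}(d_1 d_2)^s \ge \sum_{e\mid uv} e^s = \sigma_s(uv)$.

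Next I would set $N = an+b$, $M = cn+d$, and $L = r_1 N$. Because $r_1(a,b) = r_2(c,d)$ gives $r_1 a = r_2 c$ and $r_1 b = r_2 d$, we obtain $L = r_1(an+b) = r_2(cn+d) = r_2 M$, so the single integer $L$ is simultaneously $r_1$ times $N$ and $r_2$ times $M$. Applying the auxiliary inequality to $L = r_1 N$ yields $r_1^s\,\sigma_s(N) \le \sigma_s(L) \le \sigma_s(r_1)\sigma_s(N)$, and applying it to $L = r_2 M$ yields $r_2^s\,\sigma_s(M) \le \sigma_s(L) \le \sigma_s(r_2)\sigma_s(M)$.

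Finally I would combine these. Since every $\sigma_s$-value is positive, the four inequalities rearrange to $\sigma_s(L)/\sigma_s(r_1) \le \sigma_s(N) \le \sigma_s(L)/r_1^s$ and $\sigma_s(L)/\sigma_s(r_2) \le \sigma_s(M) \le \sigma_s(L)/r_2^s$, and dividing the appropriate extremes gives
$$
\frac{r_2^s}{\sigma_s(r_1)} \le \frac{\sigma_s(N)}{\sigma_s(M)} \le \frac{\sigma_s(r_2)}{r_1^s},
$$
which is exactly the assertion. I expect the only delicate point to be verifying the surjectivity (and the positivity bookkeeping) in the submultiplicative bound, so that it holds uniformly for every real $s$, positive and negative alike; once that is in place, the rest is a short manipulation of the identity $r_1 N = r_2 M$.
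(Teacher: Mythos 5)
Your proof is correct and takes essentially the same route as the paper: both arguments hinge on the two-sided bound $v^s\,\sigma_s(u) \le \sigma_s(uv) \le \sigma_s(u)\sigma_s(v)$ applied to the common value $L = r_1(an+b) = r_2(cn+d)$, followed by the same rearrangement. The only difference is internal to the lemma: the paper derives it from multiplicativity plus a prime-power computation, while you argue directly with an injection $t \mapsto vt$ and a surjection $(d_1,d_2)\mapsto d_1d_2$ on divisor sets, which is equally valid for every real $s$.
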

\begin{proof}
Observe that if $p$ is a prime and $\alpha$, $\beta$ are nonnegative integers, then $\sigma_s(p^\alpha)\sigma_s(p^\beta)$ is equal to $(1+p^s+\cdots+p^{\alpha s})(1+p^s+\cdots+p^{\beta s})$, which is larger than or equal to 
$$
(1+p^s+\cdots+p^{\alpha s})+p^{\alpha s}(p^s+p^{2s}+\cdots+p^{\beta s}) = 1+p^s+p^{2s}+\cdots+p^{(\alpha+\beta)s} = \sigma_s(p^{\alpha+\beta}).
$$
Similarly, $\sigma_s(p^{\alpha+\beta}) \geq p^{\alpha s}\sigma_s(p^\beta)$. From this and the multiplicativity of $\sigma_s$, it follows that $\sigma_s(m)\sigma_s(n) \geq \sigma_s(mn) \geq m^s\sigma_s(n)$ for all $m, n\in \mathbb N$. Therefore for every $n\geq 1$,
\begin{align*}
\sigma_s(r_1)\sigma_s(an+b) &\geq \sigma_s(r_1(an+b)) \geq r_1^s\sigma_s(an+b),\\
\sigma_s(r_2)\sigma_s(cn+d) &\geq \sigma_s(r_2(cn+d)) \geq r_2^s\sigma_s(cn+d).
\end{align*}
Since the middle terms of each inequality are the same, these lead to the desired result.
\end{proof}

\begin{example}\label{sumofdfthm200exam3.2}
If $a, b, c, d$ are integers as given in Theorem \ref{sumofdfthm200}, then it is easy to construct an example such that $\sigma_s(an+b) < \sigma_s(cn+d)$ for all $n\in \mathbb N$. For example, $\sigma_s(2n) < \sigma_s(4n)$ for all $n\in \mathbb N$ and $s\in \mathbb R$. In general, if $0<a<c$ and $a\mid c$, then every divisor of $a$ is also a divisor of $c$, while $c\mid c$ but $c\nmid a$, which implies $\sigma_s(an) < \sigma_s(cn)$ for all $n\in \mathbb N$ and $s\in \mathbb R$. 
\end{example}

\begin{problem}
By Example \ref{sumofdfthm200exam3.2}, we know that (IE1) exists for infinitely many $a, b, c, d$ satisfying $ad = bc$. It is natural to ask whether or not (IE2) and (IE3) can occur in this case. For example, if $M$ is a given positive integer, can we always find $s\in \mathbb R$ and $a, b, c, d$ satisfying the assumption of Theorem \ref{sumofdfthm200} such that $\sigma_s(an+b) < \sigma_s(cn+d)$ for $n\leq M$ and $\sigma_s(an+b) > \sigma_s(cn+d)$ for all $n\geq N$ where $N\geq M+1$? Can we choose $N$ to be $M+1$ for each $M$? If $s$ is a given real number, are there nonnegative integers $a, b, c, d$ satisfying $a, c>0$ and $ad = bc$ such that $\sigma_s(an+b) - \sigma_s(cn+d)$ changes sign infinitely many times? 
\end{problem}

\begin{problem}
Does the converse of Example \ref{sumofdfthm200exam3.2} hold? In other words, if $s\in \mathbb R$, $a, b, c, d$ are nonnegative integers, $a, c>0$, $ad=bc$, and 
$$
\sigma_s(an+b)<\sigma_s(cn+d)\quad\text{for all $n\in \mathbb N$,}
$$
can we conclude that $a\mid c$? or perhaps $(a,b)\mid(c,d)$?
\end{problem}

\begin{remark}\label{remark2.3}
Before proceeding to the next subsection, let us recall that if $1+a_n>0$ for all $n\in \mathbb N$, then the infinite product $\prod_{n=1}^\infty(1+a_n)$ is said to be convergent if $\lim_{k\to\infty}\prod_{n=1}^k(1+a_n)$ exists and is not zero. If the limit converges to zero (or diverges to $\infty$), then the infinite product is said to diverge to zero (or diverge to $\infty$), respectively. In addition, it is well-known that if $\sum_{n=1}^\infty |a_n|$ converges, then $\prod_{n=1}^\infty(1+|a_n|)$ converges; if $\sum_{n=1}^\infty |a_n| = \infty$, then $\prod_{n=1}^\infty(1+|a_n|)$ also diverges to $\infty$. 
\end{remark}

We will often refer to the following condition:
\begin{equation}\label{conditionA}
\text{(A) $a, b, c, d$ are nonnegative integers, $a, c>0$, and $ad\neq bc$.}
\end{equation}

\subsection{The case $ad\neq bc$ and $|s|\leq 1$}

\begin{theorem}\label{sumofdfthm201}
Let $a, b, c, d$ satisfy the condition {\rm(A)} given in \eqref{conditionA}. Then there exists a strictly increasing sequence $(n_k)$ of positive integers that does not depend on $s$ and  
$$
{\rm(i)} \lim_{k\to\infty}\frac{\sigma_s(an_k+b)}{\sigma_s(cn_k+d)} = 0\quad\text{for all $s\in [0,1]$}.
$$
In particular, if $0\leq s\leq 1$, then   
$$
\text{{\rm(ii)} $\displaystyle \liminf_{n\to\infty}\frac{\sigma_s(an+b)}{\sigma_s(cn+d)} = 0$ \quad and\quad  {\rm(iii)} $\displaystyle \limsup_{n\to\infty}\frac{\sigma_s(an+b)}{\sigma_s(cn+d)} = \infty$.}
$$
\end{theorem}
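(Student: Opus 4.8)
The plan is to establish statement (i) and then read off (ii) and (iii) from it. Statement (ii) is immediate: the quantity $\sigma_s(an+b)/\sigma_s(cn+d)$ is nonnegative, so its liminf is at most its limit along the subsequence $(n_k)$, namely $0$. For (iii) I would use symmetry: condition (A) is invariant under interchanging $(a,b)$ and $(c,d)$, since the relevant determinant becomes $cb-da=-(ad-bc)\neq0$ and we still have $a,c>0$. Applying (i) to the swapped tuple gives a sequence $(m_k)$ with $\sigma_s(cm_k+d)/\sigma_s(am_k+b)\to0$, i.e. $\sigma_s(am_k+b)/\sigma_s(cm_k+d)\to\infty$, forcing the limsup to be $\infty$. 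For (i) the guiding identity is $\sigma_s(m)=m^s\sigma_{-s}(m)$ with $\sigma_{-s}(m)=\prod_{p^\alpha\|m}\bigl(1+p^{-s}+\cdots+p^{-\alpha s}\bigr)$. Because $an+b$ and $cn+d$ are both of order $n$, the size factor $\bigl((an+b)/(cn+d)\bigr)^s$ is bounded by $\max(1,(a+b)/c)$ for all $s\in[0,1]$; hence the whole task reduces to choosing $n_k$ making $\sigma_{-s}(cn_k+d)$ large while $\sigma_{-s}(an_k+b)$ (equivalently $\tau(an_k+b)$) stays small.

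For the construction I fix $k$ and let $q_1<\cdots<q_k$ be the first $k$ primes not dividing $c(ad-bc)$. For each $j$ the congruence $cn\equiv-d\pmod{q_j}$ has a unique solution (since $q_j\nmid c$), and on it $an+b\equiv-c^{-1}(ad-bc)\not\equiv0\pmod{q_j}$ precisely because $q_j\nmid(ad-bc)$; this is the one place the hypothesis $ad\neq bc$ is used. By the Chinese Remainder Theorem these combine into a class $n\equiv n_0\pmod{Q_k}$ with $Q_k=\prod_{j\le k}q_j$, on which $Q_k\mid cn+d$ while $q_j\nmid an+b$ for every $j$. Writing $g=\gcd(a,b)$, $a=ga'$, $b=gb'$, one checks $\gcd(a'n_0+b',\,a'Q_k)=1$, so by Dirichlet's theorem the subprogression $n=n_0+tQ_k$ contains infinitely many terms with $a'n+b'$ prime; I pick such an $n_k>n_{k-1}$. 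The sequence $(n_k)$ is then strictly increasing and, crucially, was built without reference to $s$.

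It remains to run the estimate. For the denominator, $Q_k\mid cn_k+d$ gives $\tau(cn_k+d)\ge2^k$ and, for $s>0$, $\sigma_{-s}(cn_k+d)\ge\prod_{j\le k}(1+q_j^{-s})$, so $\sigma_s(cn_k+d)\ge(cn_k+d)^s\prod_{j\le k}(1+q_j^{-s})$. For the numerator, $an_k+b=g\cdot P$ with $P$ a large prime, whence $\sigma_s(an_k+b)=\sigma_s(g)(1+P^s)\le2\sigma_s(g)(an_k+b)^s$ and $\tau(an_k+b)\le2\tau(g)$. Combining these and absorbing the bounded size factor into a constant $C=C(a,b,c,d)$, the ratio is at most $C/\prod_{j\le k}(1+q_j^{-s})$ for $s\in(0,1]$, which tends to $0$ because $\sum_p p^{-s}=\infty$ for $s\le1$ makes the product diverge (deleting the finitely many excluded primes does not affect this, per Remark \ref{remark2.3}); and at $s=0$ the ratio is $2\tau(g)/\tau(cn_k+d)\le2\tau(g)/2^k\to0$. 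This proves (i) for every $s\in[0,1]$ simultaneously.

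The main obstacle, and the reason the construction must be this careful, is the tension between the two requirements. Simply forcing $k$ distinct primes into $cn_k+d$ already drives $n_k$, and hence $an_k+b$, to size roughly $e^{q_k}$; against such a size the crude divisor bound $\tau(an_k+b)\le(an_k+b)^\varepsilon$ would dwarf the slowly growing product $\prod_{j\le k}(1+q_j^{-s})$ and wreck the limit. One therefore genuinely needs $an_k+b$ to carry only boundedly many prime factors, which is exactly what the Dirichlet step supplies (an almost-prime sieve estimate would serve as an elementary substitute). The two points requiring routine care are the coprimality check underpinning the Dirichlet step and the verification that excluding the primes dividing $c(ad-bc)$ leaves $\prod_{j\le k}(1+q_j^{-s})$ divergent for every $s\in(0,1]$.
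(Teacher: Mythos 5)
Your proposal is correct and follows essentially the same route as the paper's proof: force a product of the first $k$ admissible primes to divide $cn_k+d$, use Dirichlet's theorem to make $an_k+b$ a bounded factor times a large prime, and conclude via the decomposition of $\sigma_s(an+b)/\sigma_s(cn+d)$ into three factors together with the divergence of $\sum_p p^{-s}$ for $0<s\leq 1$ (with the $s=0$ case handled by $\tau$). The only differences are organizational: you impose the congruences prime by prime via CRT and factor out $g=\gcd(a,b)$ before applying Dirichlet, whereas the paper solves the single equation $cn-m_ky=-d$ and factors out $\delta=\gcd(an_0+b,am_k)$, which it bounds by showing $\delta\mid ad-bc$.
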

\begin{proof}
We modify the idea of Newman \cite{New} and Aldaz et. al \cite{ABGU}. Recall that $p$ is always a prime and $p_n$ is the $n$th prime. Let $D = 2\tau(|ad-bc|)$ and for each $k\geq 1$, let $m_k$ be the product of all primes $p\leq p_k$ with $p\nmid c$. Since $ad-bc\neq 0$, $D$ is well-defined. In addition, $m_k$ is a finite product, $m_k\leq p_k^k$, and $m_k\to\infty$ as $k\to\infty$. The following observation will be used throughout this article sometimes without reference. We have, for any $s\geq 0$,
\begin{equation}\label{sumofdfeq1}
\frac{\sigma_s(n)}{n^s} = \sum_{d\mid n}\frac{1}{d^s} = \prod_{p^\alpha\parallel n}\left(1+\frac{1}{p^s}+\frac{1}{p^{2s}}+\cdots +\frac{1}{p^{\alpha s}}\right)\geq \prod_{p\mid n}\left(1+\frac{1}{p^s}\right).
\end{equation}
Next, we will construct a strictly increasing sequence $(n_k)$ of positive integers such that 
\begin{equation}\label{sumofdfeq3}
\frac{\sigma_s(an_k+b)}{(an_k+b)^s} \leq D\quad\text{and}\quad \lim_{k\to\infty}\frac{(cn_k+d)^s}{\sigma_s(cn_k+d)} = 0\quad\text{for all $s\in [0,1]$}.
\end{equation}

Since $\gcd(m_k,c) = 1$, there exists a pair of integers $n_0$, $y_0$ which is a solution to the Diophantine equation $cn-m_ky = -d$, and all solutions are given by $n = n_0+m_kt$ and $y = y_0+ct$ where $t\in \mathbb Z$ is arbitrary. We keep in mind that the integers $n_0$, $y_0$ may depend on $k$ and so $n$ and $y$ depend on $k$ and $t$. Then $an+b = a(n_0+m_kt)+b = (an_0+b)+am_kt = \delta(A+Bt)$ where $\delta = \gcd(an_0+b,am_k)$, $A = (an_0+b)/\delta$, and $B = am_k/\delta$. Then $\delta$ divides $c(an_0+b) - y_0(am_k) = a(cn_0-m_ky_0)+bc = bc-ad$. Since $(A,B) = 1$, we obtain by Dirichlet's theorem for primes in arithmetic progressions that there are infinitely many $t\in \mathbb Z^+$ such that $A+Bt$ is a prime. So we choose $t\in \mathbb Z^+$ so that $A+Bt$ is a prime larger than $|bc-ad|\geq \delta$. Therefore for each $k\geq 1$, we can choose a large positive integer $t_k$ and a pair of positive integers $n_k = n_{k, t_k}$, $y_k = y_{k, t_k}$ satisfying $cn_k-m_ky_k = -d$, $an_k+b = \delta q_k$ where $q_k$ is a prime larger than $\delta$, $n_{k+1}-n_k > 0$, and $q_{k+1}-q_k > 0$ for all $k$. Now that we have the sequence $(n_k)$, it remains to prove \eqref{sumofdfeq3}. So let $s\in [0,1]$. Since $s\leq 1$, the series $\sum_pp^{-s}$ diverges to $\infty$. In addition, by \eqref{sumofdfeq1} and the fact that $cn_k+d = m_ky_k$, we obtain 
$$
\frac{(cn_k+d)^s}{\sigma_s(cn_k+d)} \leq \prod_{p\mid m_ky_k}\left(1+\frac{1}{p^s}\right)^{-1} \leq \prod_{p\mid m_k}\left(1+\frac{1}{p^s}\right)^{-1} = \prod_{\substack{p\leq p_k\\p\nmid c}}\left(1+\frac{1}{p^s}\right)^{-1},
$$
which diverges to zero as $k\to\infty$. Hence the second part of \eqref{sumofdfeq3} is verified. Since $an_k+b = \delta q_k$, $\delta\mid ad-bc$, $q_k>\delta$, and $\sigma_s$ is multiplicative, we have  
$$
\frac{\sigma_s(an_k+b)}{(an_k+b)^s} = \frac{\sigma_s(\delta)}{\delta^s}\frac{\sigma_s(q_k)}{q_k^s} = \left(\sum_{u\mid \delta}\frac{1}{u^s}\right)\left(1+\frac{1}{q_k^s}\right)\leq 2\tau(\delta)\leq D,
$$
which is the first part of \eqref{sumofdfeq3}. By writing,
\begin{equation}\label{sumofdfeqABC}
\frac{\sigma_s(an+b)}{\sigma_s(cn+d)} = \frac{\sigma_s(an+b)}{(an+b)^s}\cdot \left(\frac{an+b}{cn+d}\right)^s\cdot \frac{(cn+d)^s}{\sigma_s(cn+d)},
\end{equation}
substituting $n=n_k$, and applying \eqref{sumofdfeq3}, we obtain (i). Then (ii) follows immediately from (i). Since (ii) holds for all $a, b, c, d$, we can interchange the role of $a, c$ and $b,d$ to obtain (iii). This completes the proof.
\end{proof}

We can extend Theorem \ref{sumofdfthm201} to the case $s\in [-1,1]$ as follows.
\begin{theorem}\label{sumofdfthm203}
Let $a$, $b$, $c$, $d$ satisfy the condition {\rm(A)} and $(n_k)$ the sequence constructed in the proof of Theorem \ref{sumofdfthm201}. Then $\sigma_s(an_k+b)/\sigma_s(cn_k+d)$ converges to zero as $k\to \infty$ for all $s\in [-1,1]$. In addition, there exists a sequence $(m_k)$ such that $\sigma_s(am_k+b)/\sigma_s(cm_k+d)\to \infty$ as $k\to \infty$ for all $s\in [-1,1]$. In particular, if $|s|\leq 1$, then 
$$
\liminf_{n\to\infty}\frac{\sigma_s(an+b)}{\sigma_s(cn+d)} = 0\quad\text{and}\quad \limsup_{n\to\infty}\frac{\sigma_s(an+b)}{\sigma_s(cn+d)} = \infty.
$$
\end{theorem}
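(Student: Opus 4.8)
The plan is to reduce the case of negative exponents to the already-settled case $s\in[0,1]$ via the elementary identity $\sigma_s(n) = n^s\sigma_{-s}(n)$, which is obtained by the substitution $d\mapsto n/d$ in $\sum_{d\mid n}d^s$. First I would record this identity and, for a fixed $s\in[-1,0]$, use it to rewrite the ratio along the sequence $(n_k)$ from Theorem \ref{sumofdfthm201} as
$$
\frac{\sigma_s(an_k+b)}{\sigma_s(cn_k+d)} = \left(\frac{an_k+b}{cn_k+d}\right)^{s}\cdot\frac{\sigma_{-s}(an_k+b)}{\sigma_{-s}(cn_k+d)}.
$$
The point is that $-s$ now lies in $[0,1]$, so the second factor is governed by the positive-exponent theorem.

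Next I would dispose of the two factors separately. Since $a,c>0$ and $n_k\to\infty$, the base $\frac{an_k+b}{cn_k+d}$ converges to $a/c>0$, so the first factor tends to the fixed positive constant $(a/c)^{s}$ and in particular stays bounded. For the second factor, because the sequence $(n_k)$ in Theorem \ref{sumofdfthm201} was constructed independently of $s$, part (i) of that theorem applies \emph{verbatim} with exponent $-s\in[0,1]$ and gives $\sigma_{-s}(an_k+b)/\sigma_{-s}(cn_k+d)\to 0$. Multiplying a convergent factor by a null factor yields $0$, which together with the case $s\in[0,1]$ (Theorem \ref{sumofdfthm201}(i) itself) establishes the first assertion for every $s\in[-1,1]$.

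To produce the sequence $(m_k)$ realizing the limit $\infty$, I would exploit the symmetry of condition {\rm(A)} in \eqref{conditionA}: interchanging $(a,b)$ with $(c,d)$ preserves (A), since $a,c>0$ is symmetric and $ad\neq bc$ becomes $bc\neq ad$. Applying the first part of the theorem to the swapped data produces a sequence $(m_k)$ along which $\sigma_s(cm_k+d)/\sigma_s(am_k+b)\to 0$ for all $s\in[-1,1]$, that is, $\sigma_s(am_k+b)/\sigma_s(cm_k+d)\to\infty$. Finally the $\liminf$/$\limsup$ statement is immediate: the ratio is positive, so $\liminf\geq 0$, and the subsequence $(n_k)$ attains the limit $0$, forcing $\liminf = 0$; symmetrically $(m_k)$ forces $\limsup = \infty$.

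I do not anticipate a genuine obstacle here. The only real idea is the reflection identity $\sigma_s(n)=n^s\sigma_{-s}(n)$; everything else is bookkeeping built on Theorem \ref{sumofdfthm201}. The one step deserving slight care is verifying that the auxiliary factor $\left(\frac{an_k+b}{cn_k+d}\right)^{s}$ cannot disrupt the limit, which holds precisely because $a,c>0$ keep its base bounded away from both $0$ and $\infty$.
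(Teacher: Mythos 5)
Your proposal is correct and follows essentially the same route as the paper: the paper also reduces $s\in[-1,0)$ to $r=-s\in(0,1]$ via the identity $\sigma_s(m)=\sigma_r(m)/m^r$, writes the ratio as $\frac{\sigma_r(an_k+b)}{\sigma_r(cn_k+d)}\left(\frac{cn_k+d}{an_k+b}\right)^r$ (the same decomposition as yours), applies Theorem \ref{sumofdfthm201} to the first factor, and obtains $(m_k)$ by swapping the roles of $(a,b)$ and $(c,d)$. No substantive differences.
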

\begin{proof}
If $0\leq s\leq 1$, then this follows from Theorem \ref{sumofdfthm201}. So assume that $-1\leq s <0$. Let $r=-s$. Then $0<r\leq 1$ and for each $m\in \mathbb N$, we have 
$$
\frac{\sigma_r(m)}{m^r} = \sum_{d\mid m}\frac{1}{d^r} = \sum_{d\mid m}d^s = \sigma_s(m).
$$
Therefore 
$$
\frac{\sigma_s(an_k+b)}{\sigma_s(cn_k+d)} = \frac{\sigma_r(an_k+b)}{\sigma_r(cn_k+d)}\left(\frac{cn_k+d}{an_k+b}\right)^r.
$$
Applying Theorem \ref{sumofdfthm201} to the right-hand side of the above equation, we obtain that the left-hand side converges to zero as $k\to\infty$, as required. Since this is true for all $a, b, c, d$, we can interchange the role of $a, b, c, d$ to obtain the sequence $(m_k)$ with the desired property. The rest follows immediately. 
\end{proof}
\begin{corollary}\label{sumofdfthm204}
If $-1\leq s_1<s_2<\cdots<s_\ell\leq 1$ and $a$, $b$, $c$, $d$ satisfy the condition {\rm(A)}, then there are infinitely many $m, n\in \mathbb N$ such that 
$$
\sigma_s(am+b) < \sigma_s(cm+d)\;\text{and}\;\sigma_s(an+b) > \sigma_s(cn+d)
$$
for all $s\in \{s_1, s_2, \ldots, s_\ell\}$.
\end{corollary}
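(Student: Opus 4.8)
The plan is to leverage the crucial feature, already established in Theorems \ref{sumofdfthm201} and \ref{sumofdfthm203}, that the sequences witnessing the extreme limiting behaviour do \emph{not} depend on $s$. Since we are comparing only finitely many exponents $s_1,\dots,s_\ell$, this $s$-independence is exactly what allows us to pass from ``for each fixed $s$'' to ``simultaneously for all the $s_j$''.

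First I would invoke Theorem \ref{sumofdfthm203} to obtain the strictly increasing sequence $(n_k)$ for which
$$
\lim_{k\to\infty}\frac{\sigma_s(an_k+b)}{\sigma_s(cn_k+d)} = 0\quad\text{for every $s\in[-1,1]$,}
$$
together with the companion sequence (call it $(m_k)$, obtained by interchanging the roles of $a,c$ and $b,d$) along which the analogous ratio tends to $\infty$ for every $s\in[-1,1]$. The point is that one and the same sequence $(n_k)$ works for all $s$ at once, and likewise for $(m_k)$; both are constructed without reference to $s$.

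Next, for each $j\in\{1,\dots,\ell\}$ the convergence to $0$ supplies an index $K_j$ such that $\sigma_{s_j}(an_k+b)/\sigma_{s_j}(cn_k+d) < 1$ whenever $k\geq K_j$. Setting $K = \max\{K_1,\dots,K_\ell\}$, every $k\geq K$ satisfies $\sigma_{s_j}(an_k+b) < \sigma_{s_j}(cn_k+d)$ for all $j$ simultaneously (using that each $\sigma_s$ is strictly positive, so a ratio below $1$ is the asserted strict inequality). As there are infinitely many such $k$, this produces infinitely many $m$ (namely $m = n_k$ with $k\geq K$) realizing $\sigma_s(am+b) < \sigma_s(cm+d)$ for all $s\in\{s_1,\dots,s_\ell\}$. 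An identical argument applied to $(m_k)$ and the divergence to $\infty$ yields infinitely many $n$ with the reverse strict inequalities for all the $s_j$ at once, giving the ``$>$'' half of the claim.

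There is no serious obstacle here; the entire content has been front-loaded into the $s$-uniform construction of Theorems \ref{sumofdfthm201} and \ref{sumofdfthm203}. The only subtlety worth flagging is that the step $K = \max_j K_j$ works precisely because $\ell$ is finite: a finite maximum of finite thresholds is itself finite, whereas the argument would break down for an infinite family of exponents, where one would need genuine uniformity of the convergence in $s$ rather than merely pointwise convergence along an $s$-independent sequence.
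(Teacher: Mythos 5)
Your proof is correct and follows essentially the same route as the paper: the paper's own one-line argument is precisely that the sequences $(n_k)$ and $(m_k)$ of Theorem \ref{sumofdfthm203} work uniformly for all $s\in[-1,1]$, so the corollary follows by taking the finitely many thresholds and their maximum, exactly as you spell out.
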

\begin{proof}
Since the sequences $(n_k)$ and $(m_k)$ in Theorem \ref{sumofdfthm203} can be used for all $s\in [-1,1]$, this corollary follows immediately from Theorem \ref{sumofdfthm203}.
\end{proof}
\begin{example}\label{example3.7race}
By running the computation in a computer, we find that 
$$
\sigma_s(30n+1) < \sigma_s(30n)\quad\text{for all $s\in \{-1, 0, 1/2, 1\}$ and $n\leq 10^6$.}
$$
However, by Corollary \ref{sumofdfthm204}, there are infinitely many $m\in \mathbb N$ such that 
$$
\sigma_s(30m+1) > \sigma_s(30m)\quad\text{for all $s\in \{-1, 0, 1/2, 1\}$.}
$$
The smallest $m$ for which $\sigma_{1/2}(30m+1) > \sigma_{1/2}(30m)$ is $m = 2338703$. When $s\in \{-1, 0, 1\}$ the smallest such $m$ seems to be very large. 
\end{example}
\begin{problem}
It may be interesting to find the smallest positive integer $n$ for which the following inequalities hold: 
$$
\sigma(6n+1) > \sigma(6n), \sigma(30n+1) > \sigma(30n), \sigma(210n+1) > \sigma(210n),\;\text{etc.}
$$
Adjusting Martin's method \cite{Mart} may lead to such the integer $n$. In general, suppose $a, b, c, d$ are fixed and satisfy the condition {\rm(A)}. Let $f:[-1,1]\to \mathbb N$ be defined by $f(s) = f_{\sigma,a,b,c,d}(s) = n_s$ be the smallest positive integer for which $\sigma_s(an_s+b)-\sigma_s(cn_s+d)$ changes sign. How is $f$ behave? Is $f$ increasing on $[1/2,1]$? Definitely, the answer depends on $a, b, c, d$. Are there $a, b, c, d$ such that $f$ is increasing on $[-1,1]$ or on $[0,1]$? Is $f$ a step function? Many questions can be asked. We leave them to the reader's curiosity.  For $2\leq k\leq 10$, let $g(k) = f_{\sigma,6,1,6,0}\left(\frac{k-1}{k}\right)$ be the smallest $n$ such that $\sigma_{\frac{k-1}{k}}(6n+1) > \sigma_{\frac{k-1}{k}}(6n)$. Then, by running the computation in a computer, we find that $g(2) = 379$, $g(3) = 5839$, $g(4) = 95929 = g(5)$, $g(6) = 326159 = g(7)$, $g(8) = 2198029 = g(9)$, and $g(10) = 7813639$. This is a numerical data which suggests that $g$ may be increasing on $[1/2,1]$. 
\end{problem}
 \begin{remark}
When $|s| > 1$, the results are entirely different. First of all, unlike the results for $|s|\leq 1$ in Theorem \ref{sumofdfthm203}, it does not matter if $ad-bc$ is zero or nonzero, $\sigma_s(an+b)/\sigma_s(cn+d)$ is always bounded away from zero and infinity when $|s|>1$ as shown in the next theorem.
\end{remark}

\subsection{The first case for $|s|>1$}
 
\begin{theorem}\label{sumofdfthm2}
Let $|s|>1$ and let $a$, $b$, $c$, $d$ be nonnegative integers with $a, c>0$. Then there are positive real numbers $R$ and $M$ such that 
$$
\frac{R^{|s|}}{\zeta(|s|)} \leq \frac{\sigma_s(an+b)}{\sigma_s(cn+d)} \leq \zeta(|s|)M^{|s|}.
$$
\end{theorem}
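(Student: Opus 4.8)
The plan is to reduce everything to the elementary two-sided estimate of $\sigma_s(n)/n^s$ coming from the series for $\zeta$. The starting point is the identity, valid for every real $s$,
$$
\frac{\sigma_s(n)}{n^s} = \sum_{d\mid n}\frac{1}{d^s},
$$
which is the rewriting already recorded in \eqref{sumofdfeq1}: replacing each divisor $d$ by $n/d$ turns $\sum_{d\mid n}d^s$ into $n^s\sum_{d\mid n}d^{-s}$. When $s>1$ the sum on the right lies in the interval $[1,\zeta(s)]$, since it contains the term $1$ (from $d=1$) and is dominated by $\sum_{d=1}^\infty d^{-s}=\zeta(s)$. Hence
$$
n^s \le \sigma_s(n) \le \zeta(s)\,n^s \qquad (s>1).
$$

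First I would treat the case $s>1$. Inserting the bounds above into the numerator and denominator of $\sigma_s(an+b)/\sigma_s(cn+d)$ gives
$$
\frac{1}{\zeta(s)}\left(\frac{an+b}{cn+d}\right)^{s} \le \frac{\sigma_s(an+b)}{\sigma_s(cn+d)} \le \zeta(s)\left(\frac{an+b}{cn+d}\right)^{s}.
$$
It then remains to bound the M\"obius ratio $\frac{an+b}{cn+d}$ between positive constants independent of $n$. Because $a,c>0$ and $n\ge 1$, both $an+b$ and $cn+d$ are positive, and the ratio is a monotone function of the real variable $n$ (its derivative has the constant-sign numerator $ad-bc$) with limit $a/c>0$; thus $R:=\inf_{n\ge1}\frac{an+b}{cn+d}>0$ and $M:=\sup_{n\ge1}\frac{an+b}{cn+d}<\infty$ are positive. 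This yields exactly $\frac{R^{s}}{\zeta(s)} \le \frac{\sigma_s(an+b)}{\sigma_s(cn+d)} \le \zeta(s)M^{s}$, which is the assertion with $|s|=s$.

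Next I would handle $s<-1$ via the substitution $r=-s>1$. Applying the same identity to $\sigma_s=\sigma_{-r}$ shows $\sigma_s(m)=\sum_{d\mid m}d^{-r}\in[1,\zeta(r)]=[1,\zeta(|s|)]$ for every $m$; that is, $\sigma_s$ \emph{itself} (not $\sigma_s(m)/m^s$) is now trapped between two constants. Dividing the numerator bound $\sigma_s(an+b)\le\zeta(|s|)$ by the denominator bound $\sigma_s(cn+d)\ge 1$, and symmetrically the other way, gives
$$
\frac{1}{\zeta(|s|)} \le \frac{\sigma_s(an+b)}{\sigma_s(cn+d)} \le \zeta(|s|).
$$
To cast this in the stated form I would simply choose $R$ and $M$ so that $R\le1\le M$ (for instance replace the $R,M$ of the previous paragraph by $\min\{1,R\}$ and $\max\{1,M\}$), whence $R^{|s|}\le1\le M^{|s|}$ and the bracket $[1/\zeta(|s|),\zeta(|s|)]$ is contained in $[R^{|s|}/\zeta(|s|),\zeta(|s|)M^{|s|}]$.

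There is no serious obstacle here; the whole content sits in the convergence bound $1\le\sum_{d\mid n}d^{-|s|}\le\zeta(|s|)$, which holds precisely because $|s|>1$ and is exactly what fails for $|s|\le1$. The only points needing a little care are the positivity and finiteness of $R$ and $M$ (guaranteed by $a,c>0$, so the two linear forms never vanish for $n\ge1$), and the observation --- worth recording, since it contrasts sharply with the $|s|\le1$ theory of Theorem \ref{sumofdfthm203} --- that the argument never uses $ad\neq bc$, so the boundedness away from $0$ and $\infty$ persists regardless of whether $(a,b)$ and $(c,d)$ are linearly independent over $\mathbb Q$.
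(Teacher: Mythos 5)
Your proof is correct and follows essentially the same route as the paper's: the two-sided bound $1\le\sigma_s(n)/n^s\le\zeta(s)$ for $s>1$ (respectively $1\le\sigma_s(n)\le\zeta(|s|)$ for $s<-1$), combined with the boundedness of $(an+b)/(cn+d)$ away from $0$ and $\infty$, and the same normalization $R\le 1\le M$ to cover the negative case. No differences worth noting.
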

\begin{proof}
Since $(an+b)/(cn+d)$ converges to $a/c>0$ as $n\to \infty$, there are $R_1, M_1>0$ such that $R_1\leq (an+b)/(cn+d)\leq M_1$ for all $n\geq 1$. Let $R = \min\{1, R_1\}$ and $M = \max\{1,M_1\}$. If $s>1$ and $n\in \mathbb N$, then 
$$
R^s \leq \frac{(an+b)^s}{(cn+d)^s} \leq M^s
$$
 and 
\begin{equation}\label{sumofdfthm2eq1}
1\leq \frac{\sigma_s(n)}{n^s} = \sum_{d\mid n}\frac{1}{d^s} \leq \sum_{d=1}^{\infty}\frac{1}{d^s} = \zeta(s).
\end{equation}
So if $s>1$ and $n\in \mathbb N$, we write $\sigma_s(an+b)/\sigma_s(cn+d)$ as in \eqref{sumofdfeqABC} and apply \eqref{sumofdfthm2eq1} to  obtain the desired result. If $s<-1$ and $n\in \mathbb N$, then 
$$
1\leq \sigma_s(n) = \sum_{d\mid n}\frac{1}{d^{-s}} \leq \zeta(-s)
$$
 and therefore 
$$
\frac{R^{-s}}{\zeta(-s)} \leq \frac{1}{\zeta(-s)} \leq \frac{\sigma_s(an+b)}{\sigma_s(cn+d)} \leq \zeta(-s) \leq \zeta(-s)M^{-s}.
$$ 
This completes the proof. 
\end{proof}

Various situations where (IE1) may occur are given in Theorems \ref{newthmB} and \ref{sumofdfthm3.8}. The integer $N$ in Theorem \ref{newthmB} may or may not be $1$ but it can be chosen to be $1$ as shown in Theorems \ref{newthmB2.10.5} and \ref{sumofdfthm3.8}. From this point on, we restrict ourselves to the case $s>1$ and leave the study of $s<-1$ to the interested reader. 

\begin{theorem}\label{newthmB}
Suppose $a$ and $c$ are distinct positive integers.
\begin{itemize}
\item[{\rm(i)}] If $a>c$ and $b, d$ are nonnegative integers, then there are $N\in \mathbb N$ and a real number $s_0>1$ such that $\sigma_s(an+b) > \sigma_s(cn+d)$ for all $s\geq s_0$ and $n\geq N$. 
\item[{\rm(ii)}] If $a<c$ and $b, d$ are nonnegative integers, then there are $N\in \mathbb N$ and a real number $s_0>1$ such that $\sigma_s(an+b) < \sigma_s(cn+d)$ for all $s\geq s_0$ and $n\geq N$.
\end{itemize}
\end{theorem}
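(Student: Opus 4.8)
The plan is to exploit the fact that for large $s$ the value $\sigma_s(n)$ is essentially governed by its largest divisor $n$, so that $\sigma_s(an+b)/\sigma_s(cn+d)$ behaves like $\bigl((an+b)/(cn+d)\bigr)^s$, which tends to $(a/c)^s$. When $a>c$ this base exceeds $1$ and the power grows without bound, overpowering the small multiplicative correction coming from the remaining divisors; when $a<c$ it shrinks to $0$. I would begin from the uniform two-sided bound \eqref{sumofdfthm2eq1}, namely $1\le \sigma_s(n)/n^s\le \zeta(s)$ for every $s>1$ and every $n\in\mathbb N$, together with the factorization \eqref{sumofdfeqABC}, which lets me write
$$
\frac{\sigma_s(an+b)}{\sigma_s(cn+d)} = \left(\frac{an+b}{cn+d}\right)^{s}\cdot\frac{\sigma_s(an+b)/(an+b)^s}{\sigma_s(cn+d)/(cn+d)^s},
$$
where the second factor lies in $[1/\zeta(s),\,\zeta(s)]$ uniformly in $n$. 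The purpose of this step is to absorb all dependence on the internal divisor structure of $an+b$ and $cn+d$ into a single factor bounded in terms of $\zeta(s)$ alone, with no dependence on $n$.

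For part (i), since $a>c$ we have $(an+b)/(cn+d)\to a/c>1$, so I would fix a constant $\rho$ with $1<\rho<a/c$ and choose $N$ so that $(an+b)/(cn+d)\ge \rho$ for all $n\ge N$. Then for every $n\ge N$ the displayed ratio is at least $\rho^s/\zeta(s)$, and this lower bound is uniform in $n$. Recalling that $\zeta(s)\to 1$ as $s\to\infty$ while $\rho^s\to\infty$, the quantity $\rho^s/\zeta(s)$ exceeds $1$ for all large $s$; moreover $\rho^s$ is increasing and $\zeta(s)$ is decreasing on $(1,\infty)$, so once $\rho^{s_0}>\zeta(s_0)$ holds at some $s_0$ it persists for every $s\ge s_0$. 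This yields $\sigma_s(an+b)>\sigma_s(cn+d)$ for all $s\ge s_0$ and all $n\ge N$, as required.

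Part (ii) follows by interchanging the roles of the pairs $(a,b)$ and $(c,d)$: under the hypothesis $a<c$, the swapped progression has leading coefficient $c>a$, so applying part (i) to it gives $\sigma_s(cn+d)>\sigma_s(an+b)$ in the corresponding range, which is precisely the desired inequality. Thus no separate argument is needed.

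I expect the only genuine subtlety — and it is a mild one — to be securing a \emph{single} threshold $s_0$ that works simultaneously for every $n\ge N$, rather than an $s_0$ depending on $n$. This is exactly why the decomposition above is the crucial maneuver: the estimate $\sigma_s(n)/n^s\le\zeta(s)$ carries no dependence on $n$, and the bound $\rho$ on the progression ratio holds for all $n\ge N$ at once, so the final choice of $s_0$ depends only on $\rho$ and the function $\zeta$. With these uniform ingredients in place, the remainder is a routine limiting estimate.
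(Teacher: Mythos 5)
Your proposal is correct and follows essentially the same route as the paper: both use the factorization \eqref{sumofdfeqABC} together with the uniform bound $1\le\sigma_s(n)/n^s\le\zeta(s)$ to reduce the ratio to $\rho^s/\zeta(s)$ for a fixed $\rho\in(1,a/c)$ and all $n\ge N$, then pick $s_0$ using $\zeta(s)\to 1$; the paper's only cosmetic difference is taking $\rho=1+\varepsilon$ with $\varepsilon=(a/c-1)/2$ and requiring $\zeta(s)<1+\varepsilon$ so that the lower bound becomes $(1+\varepsilon)^{s-1}>1$, whereas you invoke monotonicity of $\rho^s$ and $\zeta(s)$ to propagate $\rho^{s_0}>\zeta(s_0)$ to all $s\ge s_0$. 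Part (ii) is handled by the same symmetry argument in both.
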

\begin{proof}
As usual, (ii) follows from (i) by changing the role of $a$, $c$ and $b$, $d$. So we only need to prove (i). Let $\varepsilon = (a/c-1)/2$. Then $1<1+\varepsilon<a/c$. Since $(an+b)/(cn+d)\to a/c$ as $n\to \infty$, there exists $N\in \mathbb N$ such that $an+b>(1+\varepsilon)(cn+d)$ for all $n\geq N$. In addition, $\zeta(s)\to 1$ as $s\to \infty$, so there exists $s_0>1$ such that $\zeta(s) < 1+\varepsilon$ for all $s\geq s_0$. For $s\geq s_0$ and $n\geq N$, the first quotient on the right-hand side of \eqref{sumofdfeqABC} is larger than $1$, and therefore 
\begin{align*}
\frac{\sigma_s(an+b)}{\sigma_s(cn+d)} &> \left(\frac{an+b}{cn+d}\right)^s\frac{(cn+d)^s}{\sigma_s(cn+d)} > \frac{(1+\varepsilon)^s}{\sum_{k\mid cn+d}k^{-s}}\\
&> \frac{(1+\varepsilon)^s}{\zeta(s)}> (1+\varepsilon)^{s-1}> 1.
\end{align*}
This completes the proof.
\end{proof}

Adjusting Theorem \ref{newthmB} a little, we can take $N=1$ as follows.

\begin{theorem}\label{newthmB2.10.5}
Let $a, b, c, d$ be integers. Then the following statements hold. 
\begin{itemize}
\item[{\rm(i)}] If $a>c>0$ and $b\geq d\geq 0$, then there exists $s_0>1$ such that $\sigma_s(an+b) > \sigma_s(cn+d)$ for all $s\geq s_0$ and $n\geq 1$.
\item[{\rm(ii)}] If $0<a<c$ and $0\leq b\leq d$, then there exists $s_0>1$ such that $\sigma_s(an+b) < \sigma_s(cn+d)$ for all $s\geq s_0$ and $n\geq 1$.
\end{itemize}
\end{theorem}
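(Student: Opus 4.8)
The plan is to deduce (ii) from (i) by interchanging the roles of $(a,b)$ and $(c,d)$, exactly as in Theorem \ref{newthmB}, so I focus on (i). The essential point is that the hypotheses $a>c>0$ and $b\geq d\geq 0$ let me bound the ratio $(an+b)/(cn+d)$ away from $1$ \emph{uniformly in $n\geq 1$}, rather than only for large $n$ as in the proof of Theorem \ref{newthmB}. Once I have a constant $\rho>1$ with $(an+b)/(cn+d)\geq\rho$ for every $n\geq 1$, the factorization \eqref{sumofdfeqABC} together with the bounds \eqref{sumofdfthm2eq1} will give $\sigma_s(an+b)/\sigma_s(cn+d)\geq \rho^s/\zeta(s)$ for all $n$, and since $\zeta(s)\to 1$ this will exceed $1$ once $s$ is large.

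To produce $\rho$, I would study the function $f(n)=(an+b)/(cn+d)$. Its only pole is at $n=-d/c\leq 0$, so $f$ is monotonic on $[1,\infty)$; indeed $f'(n)$ has the constant sign of $ad-bc$. Hence the infimum of $f$ over $n\geq 1$ is attained either at $n=1$ or in the limit $n\to\infty$, that is, $\inf_{n\geq1}f(n)=\min\{(a+b)/(c+d),\,a/c\}$. Here the hypotheses enter decisively: $a/c>1$ because $a>c$, while $(a+b)/(c+d)>1$ because $a+b-(c+d)=(a-c)+(b-d)\geq 1>0$, using $a>c$ and $b\geq d$. Therefore $\rho:=\min\{(a+b)/(c+d),\,a/c\}>1$ works for all $n\geq 1$. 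This is precisely the step where the extra hypothesis $b\geq d$ (absent in Theorem \ref{newthmB}) is what pushes the threshold $N$ down to $1$.

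With $\rho$ in hand, for $s>1$ I would write the quotient as in \eqref{sumofdfeqABC}, bound the first factor by $\sigma_s(an+b)/(an+b)^s\geq 1$ and the third factor by $(cn+d)^s/\sigma_s(cn+d)\geq 1/\zeta(s)$ via \eqref{sumofdfthm2eq1}, and use $(an+b)/(cn+d)\geq\rho$ on the middle factor to obtain
$$
\frac{\sigma_s(an+b)}{\sigma_s(cn+d)}\ \geq\ \frac{\rho^s}{\zeta(s)}\qquad(n\geq 1).
$$
Finally, since $\zeta(s)\to 1$ as $s\to\infty$ and $\rho>1$, I can choose $s_0>1$ with $\zeta(s_0)<\rho$; then for every $s\geq s_0$ one has $\zeta(s)\leq\zeta(s_0)<\rho\leq\rho^s$, so $\rho^s/\zeta(s)>1$ and the desired strict inequality $\sigma_s(an+b)>\sigma_s(cn+d)$ holds for all $n\geq 1$.

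The only genuine obstacle is the uniform lower bound $\rho>1$, and the monotonicity-and-endpoints computation above dispatches it cleanly; everything else is a repackaging of the estimates already used for Theorems \ref{sumofdfthm2} and \ref{newthmB}. I would briefly check the degenerate subcases, namely $d=0$ (where $(a+b)/c\geq a/c>1$) and $ad=bc$ (where $f$ is constant and equal to $a/c>1$), but in each of these $\rho>1$ still holds, so no separate argument is needed.
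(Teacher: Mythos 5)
Your proposal is correct and takes essentially the same approach as the paper: both arguments reduce the claim to the uniform bound $\inf_{n\geq 1}(an+b)/(cn+d)>1$ (the paper verifies this by choosing an explicit $\varepsilon<\min\{1/c,(b-d+1)/(c+d)\}$ and checking a linear inequality, while you obtain it from the monotonicity of the ratio and its values at $n=1$ and $n\to\infty$), and then conclude exactly as in Theorem \ref{newthmB} by letting $\zeta(s)\to 1$.
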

\begin{proof}
We only need to prove (i). Following the proof of Theorem \ref{newthmB}, let $\varepsilon$ be a positive real number satisfying 
$$
\varepsilon < \min\left\{\frac1c,\frac{b-d+1}{c+d}\right\}.
$$
Since $a\geq c+1$ and $\varepsilon c<1$, we have $a-c-\varepsilon c\geq 1-\varepsilon c > 0$. Therefore, for every $n\geq 1$, 
\begin{align*}
(an+b)-(1+\varepsilon)(cn+d) &= (a-c-\varepsilon c)n+b-d-\varepsilon d \\
&\geq (1-\varepsilon c)+b-d-\varepsilon d\\
&= (1+b-d)-\varepsilon(c+d) > 0.
\end{align*}
Thus, $an+b > (1+\varepsilon)(cn+d)$ for all $n\geq 1$, that is, we can take $N$ in the proof of Theorem \ref{newthmB} to be $N = 1$. The rest is the same and the proof is complete. 
\end{proof}

The difference between the assumptions of Theorem \ref{newthmB} and \ref{newthmB2.10.5}(i) is that $b\geq d\geq 0$ in the latter while $b, d$ are any given nonnegative integers in the former. In both theorems, the real number $s_0$ is selected and depends on the given integers $a, b, c, d$. In the next theorem, $s_0$ may be chosen to be independent of $a, b, c, d$.

\begin{theorem}\label{sumofdfthm3.8}
Let $s_0>1$, $a$, $b$, $c$, $d$ satisfy the condition {\rm(A)} and $ad>bc$. If either $a^{s_0}\zeta(s_0) < c^{s_0}$ or $1\leq a<c(1-1/s_0)$, then $\sigma_s(an+b) < \sigma_s(cn+d)$ for all $n\in \mathbb N$ and for all $s\geq s_0$. In particular, for each $s_0>1$, there are infinitely many nonnegative integers $a$, $b$, $c$, $d$ such that $0< a < c$, $ad-bc > 0$, and $\sigma_s(an+b) < \sigma_s(cn+d)$ for all $s\geq s_0$ and $n\in \mathbb N$. 
\end{theorem}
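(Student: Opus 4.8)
The plan is to bound the ratio $\sigma_s(an+b)/\sigma_s(cn+d)$ from above by a quantity independent of $n$ and then force that quantity below $1$ for every $s \ge s_0$. Starting from the factorization \eqref{sumofdfeqABC} together with the two-sided estimate \eqref{sumofdfthm2eq1}, which for $s>1$ gives $\sigma_s(an+b)/(an+b)^s \le \zeta(s)$ and $(cn+d)^s/\sigma_s(cn+d) \le 1$, I would immediately obtain
\[
\frac{\sigma_s(an+b)}{\sigma_s(cn+d)} \le \zeta(s)\left(\frac{an+b}{cn+d}\right)^s \qquad (s>1,\ n\ge 1).
\]
Thus everything reduces to controlling the ratio $(an+b)/(cn+d)$ and comparing $\zeta(s)(a/c)^s$ with $1$.

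The ratio is easy: since $ad>bc$, the function $x\mapsto (ax+b)/(cx+d)$ has positive derivative $(ad-bc)/(cx+d)^2$ on $[1,\infty)$, so it is strictly increasing with supremum $a/c$; hence $(an+b)/(cn+d) < a/c$ for every $n\ge 1$, giving $\sigma_s(an+b)/\sigma_s(cn+d) < \zeta(s)(a/c)^s$. It then suffices to prove $\zeta(s)(a/c)^s \le 1$, i.e. $\zeta(s)\le (c/a)^s$, for all $s\ge s_0$. Under the first hypothesis $a^{s_0}\zeta(s_0)<c^{s_0}$, I would first note that $\zeta(s_0)>1$ forces $c/a>1$; then, because $\zeta$ is decreasing on $(1,\infty)$ while $s\mapsto (c/a)^s$ is increasing (as $c/a>1$), the single inequality $\zeta(s_0)<(c/a)^{s_0}$ propagates to $\zeta(s)\le \zeta(s_0)<(c/a)^{s_0}\le (c/a)^s$ for all $s\ge s_0$, which is exactly what is needed.

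For the second hypothesis $1\le a<c(1-1/s_0)$, the idea is to reduce it to the first. Rewriting, $c/a > 1/(1-1/s_0)=s_0/(s_0-1)$. Combining the elementary integral bound $\zeta(s_0)<1+\int_1^\infty x^{-s_0}\,dx = s_0/(s_0-1)$ with $(c/a)^{s_0}\ge c/a$ (valid since $c/a>1$ and $s_0>1$) yields $a^{s_0}\zeta(s_0)<c^{s_0}$, so this case falls under the one already treated. Assembling the pieces shows $\sigma_s(an+b)<\sigma_s(cn+d)$ for all $n\in\mathbb N$ and all $s\ge s_0$.

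Finally, for the existence statement I would fix $s_0>1$ and take $a=1$, $b=0$, $d=1$, and let $c$ range over the integers exceeding $s_0/(s_0-1)$. Each such quadruple satisfies condition (A), has $0<a<c$ and $ad-bc=1>0$, and meets the second hypothesis $1=a<c(1-1/s_0)$; since there are infinitely many admissible $c$, this produces infinitely many examples. The only genuinely delicate point is the uniformity in $s$: one must verify that the comparison $\zeta(s)\le(c/a)^s$, once established at $s=s_0$, is automatically preserved for all larger $s$, and I expect this monotonicity argument (together with the sharp enough $\zeta$-bound used in the second case) to be where all the care is needed, everything else being routine.
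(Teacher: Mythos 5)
Your proposal is correct and follows essentially the same route as the paper: the bound $\sigma_s(an+b)/\sigma_s(cn+d)\leq \zeta(s)\left(\frac{an+b}{cn+d}\right)^s$ from \eqref{sumofdfeqABC} and \eqref{sumofdfthm2eq1}, the monotonicity of $(an+b)/(cn+d)$ when $ad>bc$, the decrease of $\zeta(s)(a/c)^s$ in $s$, and the bound $\zeta(s_0)\leq s_0/(s_0-1)$ to handle the second hypothesis. The only (immaterial) differences are that you reduce the second hypothesis to the first rather than bounding $\zeta(s_0)(a/c)^{s_0}$ directly, and you exhibit a concrete family $(1,0,c,1)$ instead of the paper's more general choice of $b$ and $d$.
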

\begin{proof}
We first recall that for $s>1$ and $x>0$, we have 
$$
\sum_{n\leq x}\frac{1}{n^s} \leq 1+\int_1^x\frac{1}{t^s}dt = \frac{s}{s-1}+\frac{x^{1-s}}{1-s} \leq \frac{s}{s-1},
$$
which implies $\zeta(s) \leq s/(s-1)$. Now suppose that either $a^{s_0}\zeta(s_0) < c^{s_0}$ or $1\leq a<c(1-1/s_0)$ holds. Since $ad-bc>0$, we see that 
$$
\frac{an+b}{cn+d} < \frac ac < 1\quad\text{ for every $n\in \mathbb N$.}
$$
Therefore if $a^{s_0}\zeta(s_0) < c^{s_0}$, $n\in \mathbb N$, and $s\geq s_0$, then 
\begin{align*}
\frac{\sigma_s(an+b)}{\sigma_s(cn+d)} &= \frac{\sigma_s(an+b)}{(an+b)^s}\left(\frac{an+b}{cn+d}\right)^s\frac{(cn+d)^s}{\sigma_s(cn+d)}\\
&\leq \zeta(s)\left(\frac ac\right)^s \leq \zeta(s_0)\left(\frac ac\right)^{s_0} < 1.
\end{align*}
 If $1\leq a<c(1-1/s_0)$, then the above implies that for $s\geq s_0$ and $n\in \mathbb N$,
$$
\frac{\sigma_s(an+b)}{\sigma_s(cn+d)} \leq \zeta(s_0)\left(\frac ac\right)^{s_0} \leq \frac{s_0}{s_0-1}\left(\frac ac\right) < 1.
$$
In any case, we have $\sigma_s(an+b) < \sigma_s(cn+d)$ for all $n\in \mathbb N$ and $s\geq s_0$. It remains to show that there are infinitely many nonnegative integers $a$, $b$, $c$, $d$ satisfying all the required condition. Since $s_0>1$ is given, we can find a large positive integer $c$ such that $c(1-1/s_0) > 1$. Then there exists a positive integer $a<c(1-1/s_0)$. Now we can choose any $b\geq 0$ and then select any $d$ satisfying $d>bc/a$. Then $0 < a < c$, $ad-bc > 0$, $1\leq a<c(1-1/s_0)$, and $\sigma_s(an+b) < \sigma_s(cn+d)$ for all $s\geq s_0$ and $n\in \mathbb N$. This completes the proof.
\end{proof}

We can change the condition $ad>bc$ in Theorem \ref{sumofdfthm3.8} and modify the proof to obtain the following result.
\begin{theorem}\label{sumofdfthm3.82.11.5}
Let $s_0>1$, $a$, $b$, $c$, $d$ satisfy the condition {\rm(A)}, $ad < bc$, and $a+b < c+d$. If either $(a+b)^{s_0}\zeta(s_0) < (c+d)^{s_0}$ or $a+b < (c+d)(1-1/s_0)$, then $\sigma_s(an+b) < \sigma_s(cn+d)$ for all $n\in \mathbb N$ and for all $s\geq s_0$. In particular, for each $s_0>1$, there are infinitely many nonnegative integers $a$, $b$, $c$, $d$ having the above properties. 
\end{theorem}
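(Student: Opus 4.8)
The plan is to imitate the proof of Theorem \ref{sumofdfthm3.8}, with the constant $a/c$ appearing there replaced by $(a+b)/(c+d)$. The entire argument rests on a single uniform estimate, so I would establish that first. Since $ad<bc$, for every $n\geq1$ one computes $(an+b)(c+d)-(a+b)(cn+d)=(n-1)(ad-bc)\leq0$, whence
$$
\frac{an+b}{cn+d}\leq\frac{a+b}{c+d}<1\qquad(n\geq1),
$$
the final inequality being precisely the hypothesis $a+b<c+d$. Equivalently, $n\mapsto(an+b)/(cn+d)$ is nonincreasing and attains its supremum over $\mathbb N$ at $n=1$. This is the one and only place where the sign condition $ad<bc$ (rather than $ad>bc$) enters: here the ratio is bounded above by its value at the left endpoint $n=1$, not by its limit $a/c$, so $(a+b)/(c+d)$ assumes the role played by $a/c$ in Theorem \ref{sumofdfthm3.8}.

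Second, I would substitute this bound into the factorisation \eqref{sumofdfeqABC}. For $s\geq s_0>1$ the inequalities $\sigma_s(an+b)/(an+b)^s\leq\zeta(s)$ and $(cn+d)^s/\sigma_s(cn+d)\leq1$ from \eqref{sumofdfthm2eq1} give
$$
\frac{\sigma_s(an+b)}{\sigma_s(cn+d)}\leq\zeta(s)\left(\frac{a+b}{c+d}\right)^{s}.
$$
Because $(a+b)/(c+d)<1$, both $\zeta(s)$ and $\bigl((a+b)/(c+d)\bigr)^{s}$ decrease in $s$, so the right-hand side is largest at $s=s_0$ and it suffices to make that value less than $1$. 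If $(a+b)^{s_0}\zeta(s_0)<(c+d)^{s_0}$, this is immediate. If instead $a+b<(c+d)(1-1/s_0)$, I would combine $\zeta(s_0)\leq s_0/(s_0-1)$ with $\bigl((a+b)/(c+d)\bigr)^{s_0}\leq(a+b)/(c+d)$ (valid since the base lies in $(0,1)$ and $s_0>1$) to obtain $\zeta(s_0)\bigl((a+b)/(c+d)\bigr)^{s_0}\leq\frac{s_0}{s_0-1}\cdot\frac{a+b}{c+d}<1$. In either case $\sigma_s(an+b)<\sigma_s(cn+d)$ for all $n\geq1$ and $s\geq s_0$.

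For the ``in particular'' clause I would exhibit an explicit infinite family satisfying the second (easier) hypothesis $a+b<(c+d)(1-1/s_0)$, which automatically forces $a+b<c+d$. Fix $s_0>1$, write $\theta=1-1/s_0\in(0,1)$, and take $a=b=1$, $d=0$, and any integer $c>2/\theta$. Then $a,c>0$ and $ad-bc=-c\neq0$, so condition {\rm(A)} holds; moreover $ad=0<c=bc$ and $a+b=2<c\theta=(c+d)(1-1/s_0)$. Letting $c$ range over the integers exceeding $2/\theta$ produces infinitely many such quadruples, as required.

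I do not anticipate a genuine obstacle. The only substantive point is the recognition that when $ad<bc$ the quantity $(an+b)/(cn+d)$ is controlled not by its limit $a/c$ but by its value at $n=1$; once that monotonicity is in hand, the remaining estimates are identical to those of Theorem \ref{sumofdfthm3.8}. The one detail to watch in the construction is that $ad<bc$ together with $c>0$ forces $b>0$, so the family cannot be built with $b=d=0$; my choice $b=1$, $d=0$ respects this.
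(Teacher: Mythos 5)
Your proposal is correct and follows essentially the same route as the paper: the key step in both is the bound $(an+b)/(cn+d)\leq(a+b)/(c+d)<1$ for all $n\geq 1$ (which the paper asserts and you verify explicitly via $(an+b)(c+d)-(a+b)(cn+d)=(n-1)(ad-bc)\leq 0$), followed by the identical $\zeta(s)$ estimates from Theorem \ref{sumofdfthm3.8}. Your explicit family $a=b=1$, $d=0$, $c>2/(1-1/s_0)$ correctly fills in the ``in particular'' clause, which the paper leaves as ``the rest is easy.''
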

\begin{proof}
Since $ad<bc$ and $a+b < c+d$, we have 
$$
\frac ac < \frac{an+b}{cn+d} \leq \frac{a+b}{c+d} < 1\quad \text{for all $n\geq 1$}.
$$
Following the proof of Theorem \ref{sumofdfthm3.8}, if $(a+b)^{s_0}\zeta(s_0) < (c+d)^{s_0}$, $n\in \mathbb N$, and $s\geq s_0$, then 
\begin{align*}
\frac{\sigma_s(an+b)}{\sigma_s(cn+d)} &= \frac{\sigma_s(an+b)}{(an+b)^s}\left(\frac{an+b}{cn+d}\right)^s\frac{(cn+d)^s}{\sigma_s(cn+d)}\\
&\leq \zeta(s)\left(\frac{a+b}{c+d}\right)^s \leq \zeta(s_0)\left(\frac{a+b}{c+d}\right)^{s_0} < 1.
\end{align*}
If $a+b < (c+d)(1-1/s_0)$, then the above implies that for $s\geq s_0$ and $n\in \mathbb N$,
$$
\frac{\sigma_s(an+b)}{\sigma_s(cn+d)} \leq \zeta(s_0)\left(\frac{a+b}{c+d}\right)^{s_0} \leq \frac{s_0}{s_0-1}\left(\frac{a+b}{c+d}\right) < 1.
$$
The rest is easy, so the proof is complete.
\end{proof}
 
\begin{example}\label{example3.9race}
By using a computer, we see that $\sigma(2n+5) < \sigma(6n+17)$ for all $n\leq 10^6$ while 
$$
\sigma_{1/2}(2n+5) < \sigma_{1/2}(6n+17)\quad\text{ for $n\leq 4$}
$$
and the inequality changes to $>$ when $n=5$. By Corollary \ref{sumofdfthm204}, there are infinitely many $n\in \mathbb N$ for which these inequalities are both $<$ or both $>$, but we do not know if there exist infinitely many $n$ for which these inequality are opposite. By Theorem \ref{sumofdfthm3.8}, we have $\sigma_s(2n+5) < \sigma_s(6n+17)$ and $\sigma_s(5n+4) < \sigma_s(6n+7)$ for all $n\in \mathbb N$ and $s\geq 3$. However, we do not know if there are infinitely many $m\in \mathbb N$ for which the inequalities $\sigma(2m+5) > \sigma(6m+17)$ and $\sigma(5m+4) > \sigma(6m+7)$ simultaneously hold.
\end{example}

\begin{problem}\label{problem1.1}
Let $B = \{s_1, s_2, \ldots, s_k\}$ and $C = \{s_{k+1}, s_{k+2}, \ldots, s_{k+\ell}\}$ be subsets of $[-1,1]$ and let $a, b, c, d$ satisfy the condition (A). By Corollary \ref{sumofdfthm204}, there are infinitely many $m$ and $n$ such that 
$$
\sigma_s(am+b)<\sigma_s(cm+d)\;\text{and}\;\sigma_s(an+b)>\sigma_s(cn+d)\;\text{for all $s\in B\cup C$.}
$$
Are there infinitely many $n$ for which $\sigma_s(an+b)<\sigma_s(cn+d)$ for all $s\in B$ and $\sigma_s(an+b)>\sigma_s(cn+d)$ for all $s\in C$? Perhaps, the answer depends on $B$ and $C$. If $B, C \subseteq [1-\varepsilon,1]$ where $0<\varepsilon<1$ is very small and all elements of $B$ are less than every element of $C$, is the above statement true? Similarly, suppose $c_2, d_2$ satisfy the condition (A) and $(a,b)$, $(c,d)$, $(c_2,d_2)$ are linearly independent over $\mathbb Q$. We know that there are infinitely many $m, n\in\mathbb N$ for which $\sigma_s(am+b)<\sigma_s(cm+d)$ and $\sigma_s(cn+d)<\sigma_s(c_2n+d_2)$. Since $m$ and $n$ above may be different, it is natural to ask if there are infinitely many $r\in \mathbb N$ such that $\sigma_s(ar+b)<\sigma_s(cr+d)<\sigma_s(c_2r+d_2)$. If $(a,b), (c,d), (c_2,d_2), \ldots, (c_\ell,d_\ell)$ are linearly independent over $\mathbb Q$, can we extend the above inequality to 
$$
\sigma_s(ar+b)<\sigma_s(cr+d)<\sigma_s(c_2r+d_2)<\cdots <\sigma_s(c_\ell r+d_\ell)?
$$
\end{problem}

\begin{problem}\label{problem1}
Suppose $0\leq s\leq 1$ and $a_i, b_i, c_i, d_i$ are nonnegative integers, $a_i, c_i>0$ and $a_id_i-b_ic_i\neq 0$, for each $i = 1, 2, \ldots, \ell$. Are there infinitely many $n\in \mathbb N$ such that $\sigma_{s}(a_in+b_i) < \sigma_{s}(c_in+d_i)$ for all $i\in \{1, 2, \ldots, \ell\}$? If $I, J\subseteq \{1, 2, \ldots, \ell\}$ are disjoint, are there infinitely many $n\in \mathbb N$ for which 
$$
\sigma_{s}(a_in+b_i) < \sigma_{s}(c_in+d_i)\;\text{and}\;\sigma_{s}(a_jn+b_j) > \sigma_{s}(c_jn+d_j)
$$
 for all $i\in I$ and $j\in J$? Are they true if the set of all $(a_i,b_i)$ and $(c_i,d_i)$ are linearly independent over $\mathbb Q$? Perhaps, there exist some kind of admissible sets of $a_i$, $b_i$, $c_i$, $d_i$ to guarantee that one of the above are true. Goldston, Graham, Pintz, and Yildirim \cite{GGPY}, and De Koninck and Luca \cite{DLu} solved similar problems. Their ideas may be useful in solving Problems \ref{problem1.1} and \ref{problem1} too.
\end{problem}

\begin{problem}
What is the infimum of $s_0$ such that $\sigma_s(2n+5) < \sigma_s(6n+17)$ and $\sigma_s(5n+4) < \sigma_s(6n+7)$ for all $n\in \mathbb N$ and $s\geq s_0$? By Example \ref{example3.9race} such the infimum is $\leq 3$. In general, if $a_i, b_i, c_i, d_i$ satisfy the conditions of Theorem \ref{sumofdfthm3.8} for all $i = 1, 2, \ldots, \ell$, can we determine the infimum of $s_0$ such that $\sigma_{s}(a_in+b_i) < \sigma_{s}(c_in+d_i)$ for all $s\geq s_0$, $n\in \mathbb N$, and $i = 1, 2, \ldots, \ell$?
\end{problem}

We consider (IE2) in the next subsection. We show that for each $s_0>1$, we can find integers $a$, $b$, $c$, $d$ such that for all $s\geq s_0$, $\sigma_s(an+b) < \sigma_s(cn+d)$ for an arbitrarily long string of consecutive integers $n = 1, 2, \ldots, M$ and $\sigma_s(an+b) > \sigma_s(cn+d)$ for all large $n\geq N$. In addition, if we sacrifice the uniformity of $s$, we can force $N$ to be $M+1$. 
\subsection{The second case for $|s|>1$}

\begin{theorem}\label{sumofdfthmA}
The following statements hold. 
\begin{itemize}
\item[{\rm(i)}] Let $s_0>1$ and $M\in \mathbb N$ be given. Suppose $a$, $b$, $c$, $d$ are integers satisfying $b\geq 0$, $c\geq 1$, $a > c\zeta(s_0)$, and 
$$
d\geq \zeta(s_0)b+(M+1)(a\zeta(s_0)-c).
$$
Then $a>c>0$, $ad-bc>0$, and $\sigma_s(an+b) < \sigma_s(cn+d)$ for all $s\geq s_0$ and $n = 1, 2, \ldots, M$ and there exists $N\in \mathbb N$ such that $\sigma_s(an+b) > \sigma_s(cn+d)$ for all $s\geq s_0$ and $n\geq N$.
\item[{\rm(ii)}] Let $M\in \mathbb N$ be given. Suppose $a$, $b$, $c$, $d$ are integers satisfying $c > b\geq 1$, $a>2c$, $d = (M+q)(a-c)+b$,
 where $q = q_1/q_2$, $0<q_1<q_2$, $(q_1,q_2) = 1$, and $q_2\mid a-c$. Then $a>c>0$, $ad-bc > 0$, and there exists $s_0>1$ such that $\sigma_s(an+b) < \sigma_s(cn+d)$ for all $s\geq s_0$ and $n = 1, 2, \ldots, M$ and $\sigma_s(an+b) > \sigma_s(cn+d)$ for all $s\geq s_0$ and $n\geq M+1$.  
\end{itemize}
\end{theorem}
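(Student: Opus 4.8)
The plan is to run both parts through the factorization \eqref{sumofdfeqABC} together with the two-sided bound $1\le \sigma_s(m)/m^s\le \zeta(s)$ recorded in \eqref{sumofdfthm2eq1}. This reduces every comparison of $\sigma_s$-values to a comparison of the linear forms $an+b$ and $cn+d$, up to the factor $\zeta(s)$, which I can control because $\zeta$ is decreasing on $(1,\infty)$ and $\zeta(s)\to 1$ as $s\to\infty$.

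For part (i) I would first dispose of the algebraic claims: since $\zeta(s_0)>1$, the hypothesis $a>c\zeta(s_0)$ gives $a>c>0$ and $a\zeta(s_0)-c>0$ at once, and substituting the stated lower bound for $d$ into $ad-bc$ yields $ad-bc>0$. The crux is that the bound on $d$ is engineered so that the decreasing function $n\mapsto (c-\zeta(s_0)a)n+d-\zeta(s_0)b$ is still positive at its smallest relevant value $n=M$; a one-line evaluation there gives exactly $a\zeta(s_0)-c>0$, hence $(an+b)/(cn+d)<1/\zeta(s_0)$ for every $n\le M$. Feeding this into \eqref{sumofdfeqABC} with $\sigma_s(an+b)/(an+b)^s\le\zeta(s)$ and $(cn+d)^s/\sigma_s(cn+d)\le 1$ bounds the ratio by $\zeta(s)\zeta(s_0)^{-s}\le\zeta(s_0)^{1-s}<1$ for all $s\ge s_0$, which is the desired inequality uniformly in $s\ge s_0$ and $n\le M$. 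For the large-$n$ assertion I would follow Theorem \ref{newthmB}(i): since $a/c>\zeta(s_0)$, I can fix $1+\varepsilon$ with $\zeta(s_0)<1+\varepsilon<a/c$ and an $N$ beyond which $(an+b)/(cn+d)>1+\varepsilon$, so that $\zeta(s)\le\zeta(s_0)<1+\varepsilon$ for $s\ge s_0$ forces the ratio above $(1+\varepsilon)^s/\zeta(s)>(1+\varepsilon)^{s-1}>1$.

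For part (ii) the guiding idea is that as $s\to\infty$ the quantity $\sigma_s(m)$ is dominated by its top term $m^s$, so for large $s$ the sign of $\sigma_s(an+b)-\sigma_s(cn+d)$ is governed by the sign of $(an+b)-(cn+d)$. I would first check the bookkeeping: $q_2\mid a-c$ makes $d=(M+q)(a-c)+b$ an integer, the elementary inequalities $a>2c$ and $c>b\ge 1$ secure $a>c>0$ and positivity of $d$ and of the two forms, and a short computation gives $ad-bc=(a-c)(a(M+q)+b)>0$. The key structural identity is $(an+b)-(cn+d)=(a-c)(n-M-q)$, whose unique real zero $n^\ast=M+q$ lies strictly between $M$ and $M+1$ precisely because $0<q<1$; hence $an+b<cn+d$ for $n\le M$ and $an+b>cn+d$ for $n\ge M+1$, with strict inequalities at every integer. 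For the finitely many $n\le M$, strictness of $an+b<cn+d$ and $\zeta(s)\to 1$ make $\zeta(s)((an+b)/(cn+d))^s\to 0$, producing a threshold $s_0(n)$ past which $\sigma_s(an+b)<\sigma_s(cn+d)$; I take the maximum over those $M$ values.

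The main obstacle, and the only genuinely non-local point, is enforcing $\sigma_s(an+b)>\sigma_s(cn+d)$ for all $n\ge M+1$ with a single $s_0$. Here I would exploit $ad-bc>0$, which makes $n\mapsto(an+b)/(cn+d)$ increasing, so its infimum over $n\ge M+1$ is attained at $n=M+1$ and equals $\rho:=(a(M+1)+b)/(c(M+1)+d)$, with $\rho>1$ since $M+1>M+q$. Bounding the ratio below by $\zeta(s)^{-1}\rho^s$ through \eqref{sumofdfeqABC}, it suffices to pick $s_0$ so large that $\rho^{s_0}>\zeta(s_0)$; then for every $s\ge s_0$ one has $\rho^s\ge\rho^{s_0}>\zeta(s_0)\ge\zeta(s)$, so the ratio exceeds $1$ simultaneously for all $n\ge M+1$. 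Enlarging $s_0$ to also dominate the finitely many thresholds from the $n\le M$ case finishes the proof. I expect the verification that $\rho>1$ (equivalently that the crossing point $M+q$ falls strictly below $M+1$) together with the uniform $\zeta(s)$-versus-$\rho^s$ comparison to be where the hypotheses $0<q<1$ and $a>2c$ do their real work.
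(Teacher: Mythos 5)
Your proposal is correct. Part (i) is essentially the paper's own argument: the same reduction via the decomposition \eqref{sumofdfeqABC} and the bounds $1\le \sigma_s(m)/m^s\le\zeta(s)$, the same verification that the hypothesis on $d$ forces $\zeta(s_0)(aM+b)<cM+d$, and the same uniform choice of $N$ from $(an+b)/(cn+d)\to a/c>\zeta(s_0)$. Part (ii) reaches the same conclusion by a noticeably more direct route. The paper introduces $\alpha_0=(d-b)/b$, $\alpha=\min\{\alpha_0,2\}$ and the auxiliary functions $f(x)=(d-bx)/(ax-c)$ and $g(x)=(dx-b)/(a-cx)$, proves their monotonicity on $[1,\alpha]$, and chooses $s_0$ so that $M<f(\zeta(s))<g(\zeta(s))<M+1$; the hypotheses $a>2c$ and $c>b\ge1$ are there precisely to keep $f$ and $g$ well defined and positive on that interval. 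You instead observe the identity $(an+b)-(cn+d)=(a-c)(n-M-q)$, so the two linear forms cross at $n^\ast=M+q\in(M,M+1)$, and then handle the finitely many $n\le M$ by per-$n$ thresholds (or equivalently the worst case $n=M$) and the tail $n\ge M+1$ by the monotone infimum $\rho=(a(M+1)+b)/(c(M+1)+d)>1$ together with $\rho^{s_0}>\zeta(s_0)$. The two choices of $s_0$ amount to the same comparisons of $\zeta(s)$ against the ratios at $n=M$ and $n=M+1$ (the paper's conditions $f(\zeta(s))>M$ and $g(\zeta(s))<M+1$ unwind to exactly these), but your version dispenses with the $f,g$ machinery and, as a by-product, shows that $a>2c$ and $c>b\ge1$ are not actually needed beyond securing positivity --- only $0<q<1$ does real work in locating the sign change at $n=M+1$.
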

\begin{remark}\label{sumofdfremA2}
Obviously, there are infinitely many integers $a, b, c, d$ satisfying the assumption of Theorem \ref{sumofdfthmA}. So we can find as many examples for (IE2) as we like. The integer $N$ in Theorem \ref{sumofdfthmA}{\rm(i)} may or may not be equal to $M+1$. So {\rm(ii)} does not immediately follow from {\rm(i)}. In addition, $s_0$ and $M$ in {\rm(i)} are given independently while $s_0$ in {\rm(ii)} may depend on $M$ and $a$, $b$, $c$, $d$. We do not know whether it is possible to obtain the result as in {\rm(ii)} but $M$ and $s_0$ are independent variables. 
\end{remark}

\begin{proof}[Proof of Theorem \ref{sumofdfthmA}.]
We first prove (i). Observe that 
$$
a\zeta(s_0)-c>a-c>0,\; d-b>d-\zeta(s_0)b > 0,\; ad-bc>0,
$$
 and 
$$
\frac{d-b}{a-c} \geq \frac{d-\zeta(s_0)b}{a\zeta(s_0)-c} > M.
$$
The inequality $(d-b)/(a-c) > M$ implies $an+b < cn+d$ for all $n\leq M$ and the inequality $(d-\zeta(s_0)b)/(a\zeta(s_0)-c) > M$ leads to $\zeta(s_0)(aM+b) < cM+d$. Since $ad-bc > 0$, the sequence $((an+b)/(cn+d))_{n\geq 1}$ is increasing and therefore 
$$
\frac{an+b}{cn+d} \leq \frac{aM+b}{cM+d} < \frac{1}{\zeta(s_0)} < 1\quad\text{for all $n\leq M$}.
$$
Hence for $s\geq s_0$ and $n\leq M$, we obtain 
\begin{align}\label{sumofdfthmAeq1}
\frac{\sigma_s(an+b)}{\sigma_s(cn+d)} &=\frac{\sigma_s(an+b)}{(an+b)^s}\left(\frac{an+b}{cn+d}\right)^s\frac{(cn+d)^s}{\sigma_s(cn+d)}\\
&\leq \zeta(s)\left(\frac{aM+b}{cM+d}\right)^s \leq \zeta(s_0)\left(\frac{aM+b}{cM+d}\right) < 1,\notag
\end{align}
which implies $\sigma_s(an+b) < \sigma_s(cn+d)$. It remains to show that when $n$ is large enough, the inequality reverse. From \eqref{sumofdfthmAeq1}, we see that for $s\geq s_0$
$$
\frac{\sigma_s(an+b)}{\sigma_s(cn+d)} \geq \left(\frac{an+b}{cn+d}\right)^s\frac{1}{\zeta(s)}
$$
which converges, as $n\to \infty$, to
$$
\left(\frac ac\right)^s\frac{1}{\zeta(s)} > \frac{\zeta(s_0)^s}{\zeta(s)} > 1.
$$  
So there exists $N\in \mathbb N$ such that $\sigma_s(an+b) > \sigma_s(cn+d)$ for all $n\geq N$. This $N$ can be chosen uniformly for all $s\in [s_0,\infty)$ in the sense that it depends on $s_0$, $a$, $b$, $c$, $d$ but not on $s$ as follows. As $n\to\infty$, we have $\left(\frac{an+b}{cn+d}\right)^{s_0} \to \left(\frac ac\right)^{s_0} > \zeta(s_0)^{s_0}$. Then there exists $N\in \mathbb N$ such that if $n\geq N$, then $\left(\frac{an+b}{cn+d}\right)^{s_0} > \zeta(s_0)^{s_0}$, and so for all $s\geq s_0$ and $n\geq N$, we have $\frac{an+b}{cn+d}>1$ and
$$
\frac{\sigma_s(an+b)}{\sigma_s(cn+d)}\geq\left(\frac{an+b}{cn+d}\right)^s\frac{1}{\zeta(s)}\geq\left(\frac{an+b}{cn+d}\right)^{s_0}\frac{1}{\zeta(s)}>\frac{\zeta(s_0)^s}{\zeta(s)}>1.
$$
Next, we prove (ii). The conditions on $q_1$ and $q_2$ make sure that 
$$
\text{$q\in (0,1)$, $d\in \mathbb Z^+$, and $M<(d-b)/(a-c) < M+1$.}
$$
It is also obvious that $a>c>0$, $d>b$, and $ad-bc>0$. Let $\alpha_0 = (d-b)/b$. Then $b\alpha_0 = (M+q)(a-c) > c > b$. So $\alpha_0>1$. Let $\alpha = \min\{\alpha_0,2\}$. Define functions $f, g:[1,\alpha]\to\mathbb R$ by 
$$
f(x) = \frac{d-bx}{ax-c}\quad\text{and}\quad g(x) = \frac{dx-b}{a-cx}.
$$
Then for $x\in (1,\alpha]$, we have 
\begin{align*}
dx-b &> d-bx \geq d-b\alpha \geq d-b\alpha_0 = d-(d-b) = b > 0\quad\text{and}\\
ax-c &> a-cx \geq a-c\alpha \geq a-2c > 0.
\end{align*}
Therefore $0 < f(x) < g(x)$ for all $x\in (1,\alpha]$ and $M<f(1) = g(1) < M+1$. By using the usual method in calculus, it is easy to verify that $f$ and $g$ are continuous on $[1,\alpha]$, $f$ is decreasing on $[1,\alpha]$, and $g$ is increasing on $[1,\alpha]$. Since $f(1) > M$, $\zeta(s) > 1$, and $\zeta(s)\to 1$ as $s\to \infty$, there exists a real number $s_1>1$ such that $\zeta(s)\in (1,\alpha)$ and $f(\zeta(s))>M$ for all $s\geq s_1$. Similarly, since $M+1 > g(1)$, there is a real number $s_2>1$ such that $\zeta(s)\in (1,\alpha)$ and $g(\zeta(s)) < M+1$ for all $s\geq s_2$. Let $s_0 = \max\{s_1,s_2\}$. Then $s_0 > 1$, 
\begin{equation}\label{sumofdfthmAeq2}
\zeta(s)\in (1,\alpha)\quad\text{and}\quad M < f(\zeta(s)) < g(\zeta(s)) < M+1\quad\text{for all $s\geq s_0$}.
\end{equation} 
Next, we show that $\sigma_s(an+b) < \sigma_s(cn+d)$ for all $s\geq s_0$ and $n\leq M$. Similar to the proof of the first part, the sequence $\left((an+b)/(cn+d)\right)_{n\geq 1}$ is increasing and $aM+b < cM+d$. In addition, the inequality $M<f(\zeta(s))$ in \eqref{sumofdfthmAeq2} implies that $\zeta(s)(aM+b) < cM+d$. From these, we obtain, for all $s\geq s_0$ and $n\leq M$,
$$
\frac{\sigma_s(an+b)}{\sigma_s(cn+d)} \leq \zeta(s)\left(\frac{an+b}{cn+d}\right)^s \leq \zeta(s)\left(\frac{aM+b}{cM+d}\right)^{s} < \zeta(s)\left(\frac{aM+b}{cM+d}\right) < 1, 
$$ 
as desired. Similarly, for $s\geq s_0$, we have $\zeta(s) < \alpha \leq 2$, $a-c\zeta(s)>0$, and $g(\zeta(s)) < M+1$, which implies $\zeta(s)(c(M+1)+d) < a(M+1)+b$. Therefore, for all $s\geq s_0$ and $n\geq M+1$,
$$
\frac{\sigma_s(an+b)}{\sigma_s(cn+d)} \geq \left(\frac{an+b}{cn+d}\right)^s\frac{1}{\zeta(s)} \geq \left(\frac{a(M+1)+b}{c(M+1)+d}\right)^{s}\frac{1}{\zeta(s)} > \zeta(s)^{s-1} > 1.
$$
This completes the proof.
\end{proof}
By changing the role of $a, b, c, d$, we obtain the following corollary.
\begin{corollary}\label{sumofdfcorA3}
For each $s_0>1$ and $M\in \mathbb N$, there are integers $a, b, c, d, N$ satisfying $b, d\geq 0$, $0<a<c$, $ad-bc<0$, $N\geq M+1$ such that 
\begin{align*}
&\sigma_s(an+b) > \sigma_s(cn+d) \quad\text{for all $s\geq s_0$ and $n\leq M$}\\
&\sigma_s(an+b) < \sigma_s(cn+d) \quad\text{for all $s\geq s_0$ and $n\geq N$}.
\end{align*}
Furthermore, if $M\in \mathbb N$ is given, we can find the integers $a, b, c, d$ and real number $s_0 > 1$ as the above with the additional property that $N$ can be chosen to be $M+1$.
\end{corollary}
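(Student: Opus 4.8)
The plan is to obtain Corollary~\ref{sumofdfcorA3} as an immediate consequence of Theorem~\ref{sumofdfthmA} by interchanging the two pairs $(a,b)$ and $(c,d)$. Given $s_0>1$ and $M\in\mathbb N$, I would first invoke Theorem~\ref{sumofdfthmA}(i) to produce integers $A,B,C,D$ and an $N\in\mathbb N$ realizing its conclusion: $A>C>0$, $AD-BC>0$, $\sigma_s(An+B)<\sigma_s(Cn+D)$ for all $s\geq s_0$ and $n\leq M$, and $\sigma_s(An+B)>\sigma_s(Cn+D)$ for all $s\geq s_0$ and $n\geq N$. As noted in Remark~\ref{sumofdfremA2}, such integers exist in abundance; for definiteness one may take $C=1$, $B=0$, any integer $A>\zeta(s_0)$, and any integer $D\geq(M+1)(A\zeta(s_0)-1)$, which plainly meets the hypotheses of the theorem.

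Next I would set $a=C$, $b=D$, $c=A$, $d=B$ and read off the conclusion. Here $0<a<c$ follows from $A>C>0$; the sign condition becomes $ad-bc=CB-DA=-(AD-BC)<0$; and the nonnegativity requirements $b=D\geq0$ and $d=B\geq0$ hold because $D$ and $B$ are nonnegative in the hypotheses of Theorem~\ref{sumofdfthmA}(i). The two inequalities then flip exactly as required: $\sigma_s(An+B)<\sigma_s(Cn+D)$ reads $\sigma_s(cn+d)<\sigma_s(an+b)$, i.e. $\sigma_s(an+b)>\sigma_s(cn+d)$, for $n\leq M$, and likewise $\sigma_s(an+b)<\sigma_s(cn+d)$ for $n\geq N$. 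Moreover the resulting $N$ automatically satisfies $N\geq M+1$, since a strict $<$ for $n\leq M$ and a strict $>$ for $n\geq N$ cannot both hold at a common index.

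For the concluding \emph{furthermore} assertion I would run the same relabeling but start from Theorem~\ref{sumofdfthmA}(ii) instead, which for a given $M$ supplies integers $A,B,C,D$ together with a real number $s_0>1$ for which the threshold can be taken to be $M+1$. Since part (ii) guarantees $B\geq1$ and $D\in\mathbb Z^+$, the swapped quadruple $(a,b,c,d)=(C,D,A,B)$ again satisfies $b,d\geq0$, $0<a<c$, $ad-bc<0$, and the reversed inequalities now hold with $N=M+1$. I expect no genuine obstacle here: all the analytic content lives in Theorem~\ref{sumofdfthmA}, and the corollary is pure bookkeeping. The only point deserving a careful (but elementary) check—rather than any real difficulty—is confirming, in both parts (i) and (ii), that the constraints $b,d\geq0$ survive the swap, since they correspond precisely to $D\geq0$ and $B\geq0$ in the original hypotheses.
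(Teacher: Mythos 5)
Your proposal is correct and matches the paper's own route exactly: the paper derives Corollary \ref{sumofdfcorA3} simply "by changing the role of $a$, $b$, $c$, $d$" in Theorem \ref{sumofdfthmA}, using part (i) for the first assertion and part (ii) for the \emph{furthermore} clause, just as you do. Your explicit verification of the swapped sign and nonnegativity conditions, and of $N\geq M+1$, is more detail than the paper gives but is all sound.
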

\begin{example}\label{sumofdfexamA4}
If $s_0 = 2$ and $M = 999999$, then we obtain by Theorem \ref{sumofdfthmA}(i) that we can choose $c=2$, $b=1$, $a=5$, and $d = 6224673$ to obtain $\sigma_s(5n+1) < \sigma_s(2n+6224673)$ for all $s\geq 2$ and $n\leq 999999$ and $\sigma_s(5n+1) > \sigma_s(2n+6224673)$ for all $s\geq 2$ and $n\geq N$ for some large $N$. Suppose $M = 9999$, $c=2$, $b=1$, $a=5$, $q = 1/3$, and $d=29999$. By the proof of Theorem \ref{sumofdfthmA}(ii), we need to select $s_1, s_2 > 1$ so that 
$$
\frac{d-b\zeta(s_1)}{a\zeta(s_1)-c} > M\quad\text{and}\quad \frac{d\zeta(s_2)-b}{a-c\zeta(s_2)} < M+1,
$$
that is, $\zeta(s_1)<\frac{49997}{49996}$ and $\zeta(s_2) < \frac{50001}{49999}$. We can use Wolfram Alpha to find an estimate for $\zeta(s)$ by simply writing zeta$(s)$ and clicking enter. The decimal expansion of $\zeta(16)$ is also given as Sequence A013674 in the Online Encyclopedia of Integer Sequences. We see that we can take $s_1 = s_2 = 16$. So we choose $s_0 = 16$ and obtain that 
\begin{align*}
\sigma_s(5n+1) < \sigma_s(2n+29999)\quad\text{for all $s\geq 16$ and $n\leq 9999$},\\
\sigma_s(5n+1) > \sigma_s(2n+29999)\quad\text{for all $s\geq 16$ and $n\geq 10000$}.
\end{align*}
The number $s_0 = 16$ is not optimal but the results change when $s$ is smaller than $13$. For each $s>1$, let $h(s)$ be the smallest positive integer $n$ such that $\sigma_s(5n+1) > \sigma_s(2n+29999)$. So $h(s) = 10000$ for all $s\geq 16$. The values of $h(2), h(3), \ldots, h(15)$ are as follows: 
\begin{align*}
h(s) &= 10,000\quad\text{for $s\in \{13, 14, 15, 16\}$},\quad h(s) = 9999\quad\text{for $s\in\{10, 11, 12\}$},\\
h(9) &= 9997, \quad h(8) = 9991, \quad h(7) = 9981, \quad h(6) = 9995 \\
h(5) &= 9883, \quad h(4) = 9691, \quad h(3) = 9115, \quad h(2) = 7207.
\end{align*}
\end{example}

\begin{problem}
From Remark \ref{sumofdfremA2}, prove or disprove that if $M$ is given and is very large, then there exists $s_0 > 1$ such that if $1< s\leq s_0$, then there are no integers $a, b, c, d$ satisfying $a>c>0$, $b, d\geq 0$, $ad-bc\neq 0$, $\sigma_s(an+b) < \sigma_s(cn+d)$ for all $n\leq M$ and $\sigma_s(an+b) > \sigma_s(cn+d)$ for all $n\geq M+1$.
\end{problem}

\begin{problem}
From Example \ref{sumofdfexamA4}, it may be interesting to study the behavior of the function $h$ on $(1,16]$. In general, suppose $a, b, c, d, M, s_0$ satisfy the conditions in Theorem \ref{sumofdfthmA}(ii), and $h(s)$ is the smallest positive integer $n$ such that $\sigma_s(an+b)-\sigma_s(cn+d)$ changes sign. Then $h(s) = M+1$ for all $s\geq s_0$. Can we describe the behavior of $h$ on $(1,s_0]$?
\end{problem}

Recall that $\left\lceil s\right\rceil$ is the smallest integer larger than or equal to $s$. Next, we construct an example for (IE3).

\subsection{The third case for $|s|>1$}

\begin{theorem}\label{newthmA}
Let $s > 0$ and $n = \left\lceil s\right\rceil$ be given. If $p$ is a prime larger than $1+n2^{n+1}$, then $\sigma_s(p-1)>\sigma_s(p)$ and $\sigma_s(p) < \sigma_s(p+1)$. In particular, there are infinitely many $m, r\in \mathbb N$ such that 
$$
\sigma_s(m) > \sigma_s(m+1)\; \text{and}\; \sigma_s(r) < \sigma_r(r+1).
$$
\end{theorem}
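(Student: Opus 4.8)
The plan is to establish the two inequalities $\sigma_s(p-1) > \sigma_s(p)$ and $\sigma_s(p) < \sigma_s(p+1)$ separately, and then deduce the final assertion from the infinitude of primes. Since any prime $p > 1 + n2^{n+1} \geq 5$ is odd, I will use the exact identity $\sigma_s(p) = 1 + p^s$ throughout. The second inequality is immediate: because $1$ and $p+1$ are divisors of $p+1$ and every other divisor contributes a positive amount, $\sigma_s(p+1) \geq 1 + (p+1)^s > 1 + p^s = \sigma_s(p)$ for all $s > 0$. All of the real work lies in the first inequality.

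For $\sigma_s(p-1) > \sigma_s(p)$, I would set $N = p-1$. Since $p$ is odd and $N \geq 6$, the integers $1, 2, N/2, N$ are four distinct divisors of $N$, so $\sigma_s(N) \geq 1 + 2^s + (N/2)^s + N^s$. Comparing with $\sigma_s(p) = 1 + (N+1)^s$, it suffices to prove $(N+1)^s < 2^s + (N/2)^s + N^s$, and since $2^s > 0$ it is enough to show the cleaner inequality $(N+1)^s \leq N^s + (N/2)^s$, that is, after dividing by $N^s$, that $(1 + 1/N)^s \leq 1 + 2^{-s}$.

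The heart of the matter is this last estimate, and here the hypothesis enters. From $s \leq n = \lceil s\rceil$ and the monotonicity of $x \mapsto x2^{x+1}$ one obtains $s2^{s+1} \leq n2^{n+1} < N$, so that $2s/N < 2^{-s}$; moreover $s/N < n/(n2^{n+1}) = 2^{-(n+1)} < 1$. Using the elementary bounds $(1+1/N)^s \leq e^{s/N}$ and $e^{y} \leq 1 + 2y$ for $0 \leq y \leq 1$, I would conclude $(1+1/N)^s \leq e^{s/N} \leq 1 + 2s/N < 1 + 2^{-s}$, as needed. This is precisely where the threshold $1 + n2^{n+1}$ is used. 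The main obstacle is choosing which divisors of $p-1$ to retain — keeping $2$ and $(p-1)/2$, which are guaranteed because $p-1$ is even — and then calibrating the elementary estimate against $2^{-s}$; the worst case is a Sophie--Germain-type $p-1 = 2q$ with $q$ prime, for which these four divisors are all that is available, so no larger lower bound on $\sigma_s(p-1)$ can be used in general.

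Finally, for the ``in particular'' clause, I would invoke the infinitude of primes to produce infinitely many primes $p > 1 + n2^{n+1}$. Taking $m = p-1$ gives $\sigma_s(m) = \sigma_s(p-1) > \sigma_s(p) = \sigma_s(m+1)$, and taking $r = p$ gives $\sigma_s(r) = \sigma_s(p) < \sigma_s(p+1) = \sigma_s(r+1)$, which yields infinitely many $m$ and $r$ of each type and completes the argument.
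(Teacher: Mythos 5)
Your proof is correct and follows essentially the same route as the paper: both arguments lower-bound $\sigma_s(p-1)$ using the divisors $1$, $(p-1)/2$, $p-1$ (you also retain $2$, which is harmless), use $\sigma_s(p)=1+p^s$, and reduce everything to the estimate $\left(1+\frac{1}{p-1}\right)^s < 1+2^{-s}$. The only difference is how that last inequality is verified --- the paper expands $\left(1+\frac{1}{p-1}\right)^n$ with the binomial theorem and dominates it by a geometric series, whereas you use $\left(1+\frac1N\right)^s \le e^{s/N} \le 1+2s/N$ together with $s2^{s+1}\le n2^{n+1}<p-1$; both are valid.
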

\begin{proof}
Suppose $p>1+n2^{n+1}$. Then $\sigma_s(p+1)\geq 1+(p+1)^s>1+p^s = \sigma_s(p)$. For the other inequality, we observe that $n/(p-1) < 1/2^{n+1}$ and 
\begin{align*}
\left(1+\frac{1}{p-1}\right)^s &\leq \left(1+\frac{1}{p-1}\right)^n = \sum_{k=0}^n{n\choose k}\left(\frac{1}{p-1}\right)^k\\
&\leq\sum_{k=0}^n\left(\frac{n}{p-1}\right)^k < \sum_{k=0}^n\left(\frac{1}{2^{n+1}}\right)^k <1+\sum_{k=1}^\infty\frac{1}{2^{n+k}}\\
&= 1+\frac{1}{2^n} \leq 1+\frac{1}{2^s}.
\end{align*}
Then $\sigma_s(p-1)-\sigma_s(p)\geq 1+((p-1)/2)^s+(p-1)^s-1-p^s$, which is equal to
$$
(p-1)^s\left(1+\frac{1}{2^s}-\left(1+\frac{1}{p-1}\right)^s\right) > 0.
$$
Therefore $\sigma_s(p-1) > \sigma_s(p)$ and $\sigma_s(p) < \sigma_s(p+1)$, as required. Since there are infinitely many such $p$, we can take $m=p-1$ and $r=p$ to obtain infinitely many $m, r\in \mathbb N$ as desired.
\end{proof}

By Theorem \ref{newthmB}, in order to construct an example for (IE3), it is easier (may be necessary) to consider the case $a=c$. Then we have a generalization of Theorem \ref{newthmA} as follows.
\begin{theorem}\label{newthmC}
Suppose $s>0$, $a\in \mathbb N$, $b, d$ are distinct nonnegative integers, and $(a,b)=(a,d)=1$. Then there are infinitely many $m, n\in \mathbb N$ such that $\sigma_s(am+b) < \sigma_s(am+d)$ and $\sigma_s(an+b) > \sigma_s(an+d)$.
\end{theorem}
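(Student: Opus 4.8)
The plan is to generalize the mechanism behind Theorem~\ref{newthmA}: force one of the two values to be prime (so that its $\sigma_s$ equals $1+(\cdot)^s$, the smallest possible for its size) while forcing the other to carry an extra fixed divisor that boosts its $\sigma_s$. First I would reduce to a single inequality by symmetry. Since the hypotheses are unchanged under interchanging $b$ and $d$, it suffices to prove the single statement that there are infinitely many $n$ with $\sigma_s(an+b)>\sigma_s(an+d)$; applying this statement to the triple $(a,d,b)$ in place of $(a,b,d)$ then produces infinitely many $m$ with $\sigma_s(am+d)>\sigma_s(am+b)$, which is the other half. Thus from now on I only chase the inequality $\sigma_s(an+b)>\sigma_s(an+d)$, and I arrange for $an+d$ to be prime and $an+b$ to be divisible by a suitable fixed prime $\ell$.

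Next I would set up the congruences. I choose a prime $\ell$ with $\ell\nmid a$ and $\ell\nmid(b-d)$, which is possible because only finitely many primes are excluded and $b\neq d$. Since $\ell\nmid a$, the congruence $an+b\equiv 0\pmod{\ell}$ has a unique solution $n\equiv n_0\pmod{\ell}$, so writing $n=n_0+\ell t$ gives $an+d=(an_0+d)+a\ell t$, an arithmetic progression in $t$ with common difference $a\ell$. To invoke Dirichlet's theorem I must check that $\gcd(an_0+d,a\ell)=1$: any prime $q\mid a$ dividing $an_0+d$ would divide $d$, contradicting $(a,d)=1$; and since $an_0\equiv -b\pmod{\ell}$ we have $an_0+d\equiv d-b\not\equiv 0\pmod{\ell}$ by the choice of $\ell$. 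Hence Dirichlet yields infinitely many $t$ for which $an+d$ is a (large) prime, while by construction $\ell\mid an+b$ for every such $n$.

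Finally I would compare the two divisor sums for these $n$. For $an+d=p$ prime we have $\sigma_s(an+d)=1+(an+d)^s$, whereas $\ell\mid an+b$ supplies the three distinct divisors $1,(an+b)/\ell,\,an+b$ once $n$ is large, so $\sigma_s(an+b)\geq 1+\bigl((an+b)/\ell\bigr)^s+(an+b)^s$. Writing $N=an+b$ and $an+d=N-(b-d)$, the difference is at least $(N/\ell)^s+N^s-(N-(b-d))^s=N^s\bigl(\ell^{-s}+1-(1-(b-d)/N)^s\bigr)$, and since $(1-(b-d)/N)^s\to 1$ while $\ell^{-s}>0$, the bracketed factor tends to $\ell^{-s}>0$; hence the difference is positive for all large $n$. (When $b>d$ one sees even more cheaply that $(N-(b-d))^s\le N^s$, leaving a difference of at least $(N/\ell)^s>0$.) This gives infinitely many $n$ with $\sigma_s(an+b)>\sigma_s(an+d)$, and the symmetry step finishes the proof. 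The main obstacle is not any single estimate but the bookkeeping in the second step: I must choose $\ell$ and enforce the simultaneous ``prime on one progression, forced divisor on the other'' condition so that the coprimality hypothesis of Dirichlet's theorem genuinely holds, and this is precisely where the assumptions $(a,b)=(a,d)=1$ and $b\neq d$ are consumed.
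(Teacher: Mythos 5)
Your proposal is correct and uses essentially the same mechanism as the paper's proof: force $an+d$ to be prime while forcing $an+b$ to carry a fixed auxiliary prime divisor (via the Chinese remainder theorem and Dirichlet's theorem), then compare $1+(an+d)^s$ against the three-divisor lower bound for $\sigma_s(an+b)$. The only cosmetic differences are that you obtain the reverse inequality by a clean symmetry reduction where the paper proves it directly by making $am+b$ prime (using $b<d$), and you close the estimate with a soft limit $N\to\infty$ where the paper gives an explicit binomial bound under the condition $p>\ell+\ell kq^{k+1}$.
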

\begin{proof}
The statement is symmetric with respect to $b$ and $d$, so we can assume without loss of generality that $b<d$. Since $(a,b)=1$, there are infinitely many $m\in \mathbb N$ such that $am+b$ is prime, and therefore 
$$
\sigma_s(am+b) = 1+(am+b)^s < 1+(am+d)^s \leq \sigma_s(am+d).
$$
Next, let $\ell = d-b$, $k = \left\lceil s\right\rceil$, and $q$ a prime with $q\nmid a\ell$. Then $(\ell,q)=(a,q)=(a,d) = 1$. By Chinese remainder theorem and Dirichlet's theorem for primes in arithmetic progressions, there are infinitely many primes $p$ such that 
$$
\text{$p\equiv \ell\pmod q$, $p\equiv d\pmod a$, $p>d$, and $p>\ell+\ell kq^{k+1}$.}
$$
For each such $p$, let $n=n_p=(p-d)/a$. Then $\sigma_s(an+d) = \sigma_s(p)=1+p^s$, $an+b = p-d+b = p-\ell$, and so $\sigma_s(an+b)\geq 1+(p-\ell)^s+((p-\ell)/q)^s$. Then $\sigma_s(an+b)-\sigma_s(an+d)\geq (p-\ell)^s(1+1/q^s)-p^s$. To show that $\sigma_s(an+b)>\sigma_s(an+d)$, it is enough to show that $1+1/q^s > p^s/(p-\ell)^s$. We have
\begin{align*}
\left(\frac{p}{p-\ell}\right)^s &= \left(1+\frac{\ell}{p-\ell}\right)^s \leq \left(1+\frac{\ell}{p-\ell}\right)^k\\
&=\sum_{j=0}^k{k\choose j}\left(\frac{\ell}{p-\ell}\right)^j \leq \sum_{j=0}^k\left(\frac{k\ell}{p-\ell}\right)^j.
\end{align*}
Since $k\ell/(p-\ell) < 1/q^{k+1}$, the above is less than 
$$
1+\frac{1}{q^{k+1}}+\frac{1}{q^{k+2}}+\frac{1}{q^{k+3}}+\cdots = 1+\frac{1}{(q-1)q^k} \leq 1+\frac{1}{q^{k}} \leq 1+\frac{1}{q^s}.
$$
So the proof is complete.
\end{proof}

\begin{problem}
Suppose $s>1$ and $a, b, c, d$ are positive integers. If 
$$
\text{$\sigma_s(an+b)-\sigma_s(cn+d)$ changes sign infinitely often,}
$$
is it true that $a=c$? If it is not true, assuming further that $s$ is large and $(a,b) = (c,d) = 1$, can we conclude that $a=c$? 
\end{problem}

\begin{problem}
By Theorem \ref{sumofdfthm201}, $\sigma(30n+1) - \sigma(30n)$ has infinitely many sign changes but it may be interesting to know more about this. For example, is it true that 
$$
\text{(i)}\quad\sum_{\substack{n\leq m\\ \sigma(30n+1)<\sigma(30n)}}1 > \sum_{\substack{n\leq m\\ \sigma(30n+1)>\sigma(30n)}}1 \quad\text{for all $m\in \mathbb N$?}
$$ 
Is it true that 
$$
\text{(ii)}\quad\sum_{n\leq m} \sigma(30n) > \sum_{n\leq m} \sigma(30n+1) \quad\text{for all $m\in \mathbb N$?}
$$ 

Numerical evidence suggests that they are true but we currently do not have a proof. If they are not true, do the sign reverse for infinitely many $m$? We can replace $30n+1$ and $30n$ by $an+b$ and $cn+d$ and we can also change $\sigma$ to $\sigma_s$, or any other function of interest. Perhaps, it is true that 
$$
\text{(iii)}\quad \sum_{\substack{n\leq m\\\sigma(30n+1)<\sigma(30n)}} \frac1n> \sum_{\substack{n\leq m\\\sigma(30n+1)>\sigma(30n)}} \frac1n. 
$$
Maybe, the left-hand side of (iii) divided by $\log m$ is closed to $1$ as $m\to\infty$ while the right-hand side of (iii) divided by $\log m$ is closed to $0$ as $m\to\infty$. This question is motivated by those on primes. So it should be useful to read, for example, the articles by Bays and Hudson \cite{BHu}, Knapowski and Turan \cite{KTu, KTu1, KTu2}, and Meng \cite{Meng2}. See also \cite{PPo2, Pon2021long, PSu} and some of our future articles for more information on palindromes and the comparison between the number of palindromes in different bases.
\end{problem}

\begin{problem}
Let $J_s$ and $\varphi_s$ be the arithmetic functions that are defined for all $n\in \mathbb N$ by 
$$
J_s(n) = n^s\prod_{p\mid n}\left(1-\frac{1}{p^s}\right)\quad\text{and}\quad \varphi_s(n) = \sum_{\substack{1\leq m\leq n\\(m,n)=1}}m^s.
$$
So $J_1(n) = \varphi(n) = \varphi_0(n)$ and therefore $J_s$ and $\varphi_s$ are generalizations of the Euler function $\varphi$. What are the corresponding results for $J_s(an+b)$, $J_s(cn+d)$, $\varphi_s(an+b)$, and $\varphi_s(cn+d)$. There are other generalizations of $\varphi$ which may be of interest. The inequalities related to $\sigma_s$, $\varphi_s$, and $J_s$ over the Fibonacci numbers are also investigates in \cite{JPo, Luca}. For some recent articles related to divisibility properties of the Fibonacci numbers, see for example in \cite{OPo2020, OPo2021, PPo2020}.
\end{problem}

\begin{problem}\label{problem12sign}
We already showed that the following three cases may occur:
\begin{itemize}
\item[(i)] $\sigma_s(an+b) - \sigma_s(cn+d)$ never changes sign,
\item[(ii)] $\sigma_s(an+b) - \sigma_s(cn+d)$ changes sign exactly once,
\item[(iii)] $\sigma_s(an+b) - \sigma_s(cn+d)$ changes sign infinitely many times.
\end{itemize}
Is it possible to construct an example for which $\sigma_s(an+b) - \sigma_s(cn+d)$ changes sign exactly two times? If $k$ is a given positive integer, can we find $s\in \mathbb R$, $a, b, c, d\geq 0$ such that there are exactly $k$ sign changes in $\sigma_s(an+b) - \sigma_s(cn+d)$?
\end{problem}

We can replace $\sigma$ by $\omega$ and $\Omega$ and obtain the same results in Theorem \ref{sumofdfthm201}. Here $\omega(n)$ is the number of distinct prime divisors  of $n$ and $\Omega(n)$ is the number of prime powers dividing $n$. We record it as a theorem.
\begin{theorem}
Let $a, b, c, d$ satisfy the condition {\rm(A)}. Then there exists a strictly increasing sequence $(n_k)$ of positive integers such that  
$$
{\rm(i)} \lim_{k\to\infty}\frac{\omega(an_k+b)}{\omega(cn_k+d)} = \lim_{k\to\infty}\frac{\Omega(an_k+b)}{\Omega(cn_k+d)} = 0.
$$
In particular,   
\begin{align*}
&\text{{\rm(ii)} $\displaystyle \liminf_{n\to\infty}\frac{\omega(an+b)}{\omega(cn+d)} = \liminf_{n\to\infty}\frac{\Omega(an+b)}{\Omega(cn+d)} = 0$};\\
&\text{{\rm(iii)} $\displaystyle \limsup_{n\to\infty}\frac{\omega(an+b)}{\omega(cn+d)} = \limsup_{n\to\infty}\frac{\Omega(an+b)}{\Omega(cn+d)} = \infty$.}
\end{align*}
\end{theorem}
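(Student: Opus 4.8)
The plan is to recycle \emph{verbatim} the sequence $(n_k)$ produced in the proof of Theorem \ref{sumofdfthm201} and to observe that the very features which drove the $\sigma_s$ argument also force the prime-factor counts to behave in exactly the right way. Recall from that construction that $an_k+b = \delta q_k$, where $q_k$ is a prime with $q_k > \delta$ and $\delta$ is a divisor of $|ad-bc|$, while $cn_k+d = m_k y_k$, where $m_k$ is the product of all primes $p \le p_k$ with $p\nmid c$.

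First I would bound the numerators. Since $q_k$ is a prime exceeding $\delta$, it cannot divide $\delta$, so the factorization $an_k+b = \delta q_k$ gives $\omega(an_k+b) = \omega(\delta)+1$ and $\Omega(an_k+b) = \Omega(\delta)+1$. As $\delta \mid |ad-bc|$, both quantities are bounded above by constants depending only on $a,b,c,d$; explicitly $\omega(an_k+b) \le \omega(|ad-bc|)+1$ and likewise for $\Omega$.

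Next I would show that the denominators diverge. The integer $m_k$ is squarefree and divides $cn_k+d$, so $\omega(cn_k+d) \ge \omega(m_k)$ and $\Omega(cn_k+d) \ge \Omega(m_k) = \omega(m_k)$. But $\omega(m_k)$ counts the primes $p \le p_k$ that do not divide $c$; since there are exactly $k$ primes $p\le p_k$ and at most $\omega(c)$ of them divide the fixed integer $c$, we obtain $\omega(m_k) \ge k - \omega(c) \to \infty$. Dividing the bounded numerators by these unbounded denominators yields (i); then (ii) is immediate, because each term of a nonnegative sequence bounds its own limit inferior from above, and (iii) follows by interchanging the roles of $(a,b)$ and $(c,d)$, exactly as in Theorem \ref{sumofdfthm201}.

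There is essentially no serious obstacle here; indeed this case is lighter than the $\sigma_s$ one, since we need neither the divergence of $\sum_p p^{-s}$ nor any convergence of infinite products, but only the elementary facts that $\omega$ and $\Omega$ are bounded on integers of the shape $\delta q_k$ and unbounded on multiples of the squarefree $m_k$. The one point demanding a moment's care is verifying that $q_k$ genuinely contributes a \emph{new} prime, i.e. $q_k\nmid\delta$, which is guaranteed by the inequality $q_k > \delta$ already arranged in the earlier proof.
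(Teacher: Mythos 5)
Your proposal is correct and follows essentially the same route as the paper: reuse the sequence $(n_k)$ from the proof of Theorem \ref{sumofdfthm201}, bound $\omega(an_k+b)=\omega(\delta q_k)\le 1+\omega(|ad-bc|)$ and $\Omega(an_k+b)\le 1+\Omega(|ad-bc|)$, and note that $\omega(cn_k+d)\ge\omega(m_k)\to\infty$ and $\Omega(cn_k+d)\ge\Omega(m_k)\to\infty$. Your added details (that $q_k\nmid\delta$ because $q_k>\delta$, and the explicit count $\omega(m_k)\ge k-\omega(c)$) are correct elaborations of steps the paper leaves implicit.
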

\begin{proof}
We construct the sequence $(n_k)$ in exactly the same way as in the proof of Theorem \ref{sumofdfthm201}. Then 
\begin{align*}
\omega(an_k+b) &= \omega(\delta q_k)\leq 1+\omega(|ad-bc|),\\
\Omega(an_k+b) &= \Omega(\delta q_k)\leq 1+\Omega(|ad-bc|),\\
\omega(cn_k+d) &= \omega(m_ky_k)\geq \omega(m_k)\to\infty\;\text{as $k\to\infty$},\\
\Omega(cn_k+d) &= \Omega(m_ky_k)\geq \Omega(m_k)\to\infty\;\text{as $k\to\infty$}.
\end{align*}
These imply the desired results.
\end{proof}

\begin{problem}
We can extend the functions $\omega$ and $\Omega$ by defining 
$$
\omega_s(n) = \sum_{p\mid n}p^s\quad\text{and}\quad \Omega_s(n) = \sum_{p^k\parallel n} kp^s\quad\text{for all $n\in \mathbb N$}.
$$
What are the corresponding results for $\omega_s$ and $\Omega_s$ over arithmetic progressions? Some results on $\omega(F_n)$ and $\omega(L_n)$ are obtained in \cite{BLMS} and \cite{ Pon16}.
\end{problem}

\begin{problem}
It may be interesting to compare $f(A_{an+b})$ and $f(A_{cn+d})$, where $f$ is an arithmetic function and $(A_n)_{n\geq 1}$ is an integer sequence which is of interest. For example, suppose $F_n$ is the $n$th Fibonacci number, can we say anything about the behavior of $\sigma_s(F_{an+b})$ and $\sigma_s(F_{cn+d})$? The order (or rank) of appearance of $n$ in the Fibonacci sequence, denoted by $z(n)$, is the smallest positive integer $m$ such that $n\mid F_m$. The function $z$ is not multiplicative but is closed to being multiplicative because 
\begin{align*}
z(\lcm[m,n]) &= \lcm[z(m),z(n)],\;\text{ and so}\\
z(p_1^{a_1}p_2^{a_2}\cdots p_k^{a_k}) &= \lcm[z(p_1^{a_1}), z(p_2^{a_2}), \ldots, z(p_k^{a_k})],
\end{align*}
 where $p_1, p_2, \ldots, p_k$ are distinct primes and $a_1, a_2, \ldots, a_k$ are positive integers. Can we compare $z(an+b)$ and $z(cn+d)$? Some formulas concerning $z(n)$ when $n$ is of special forms are shown in \cite{KPo2, Pon8, Pon20} and in the references of these articles. We may also replace $z$ by any other interesting arithmetic functions.  
\end{problem}

\begin{problem}
Define $\sigma_s(n,q,a)$ by 
$$
\sigma_s(n,q,a) = \sum_{\substack{d\mid n\\d\equiv a\Mod q}}d^s.
$$
What are the results on the inequalities between $\sigma_s(n,q,a_1)$ and $\sigma_s(n,q,a_2)$? For example, if $q=2$, we consider the inequalities between the sum of odd divisors of $n$ and even divisors of $n$. Are there an infinite number of sign changes in $\sigma_s(n,2,0)-\sigma_s(n,2,1)$ or in $\sum_{n\leq x}\sigma_s(n,2,0) - \sum_{n\leq x}\sigma_s(n,2,1)$? The answers may generally depend on $s, n, q, a$. We can define $\omega_s(n,q,a)$ and $\Omega_s(n,q,a)$ in a similar way. For some results in this direction, see for example in the work of Khan \cite{Khan}, Liu, Shparlinski, and Zhang \cite{LSZ}, Meng \cite{Meng2} and Pongsriiam and Vaughan \cite{Pon10, Pon3, PVa},  and references therein. If we are interested in other kinds of divisors, we may consider the comparison between the sums of those divisors. Define $\sigma_{s,A}(n) = \sum_{d\in A(n)}d^s$ for all $n\in \mathbb N$, where $A(n)$ is, for example, the squarefree divisors of $n$ or the unitary divisors of $n$. There are many other possible research problems. We leave them to the reader's imagination.
\end{problem}

\subsection*{Acknowledgements}
This project is funded by the National Research Council of Thailand (NRCT), Grant Number NRCT5-RSA63021-02.



\begin{thebibliography}{99}


\bibitem{ABGU} J. M. Aldaz, A. Bravo, S. Guti\'errez, and A. Ubis, \emph{A theorem of D. J. Newman on Euler's $\varphi$ function and arithmetic progressions}, Amer. Math. Monthly, 108 (2001), 364--367.

\bibitem{BHu} C. Bays and R. H. Hudson, \emph{A new bound for the smallest $x$ with $\pi(x)>\text{Li}(x)$}, Math. Comp., 69 (2000), 1285--1296. 

\bibitem {BLMS} Y. Bugeaud, F. Luca, M. Mignotte, and S. Siksek, \emph{On Fibonacci numbers with few prime divisors}, Proc. Japan Acad. Ser. A Math Sci., 81 (2005), 17--20. 

\bibitem{DLu} J. M. De Koninck and F. Luca, \emph{Arithmetic functions monotonic at consecutive arguments}, Stud. Sci. Math. Hung., 51(2) (2014), 155--164. 

\bibitem{GGPY} D. A. Goldston, S. W. Graham, J. Pintz, and C. Y. Yildirim, \emph{Small gaps between almost primes, the parity problem, and some conjecture of Erd\"os on consecutive integers}, Int. Math. Res. Notices, 7 (2011), 1439--1450.

\bibitem{JPo} M. Jaidee and P. Pongsriiam, \emph{Arithmetic functions of Fibonacci and Lucas numbers}, Fibonacci Quart., 57(3) (2019), 246--254. 

\bibitem{Jar} D. Jarden, \emph{Recurring Sequences}, Riveon Lematematika, Jerusalem, 1973.

\bibitem{Khan} R. Khan, \emph{The divisor function in arithmetic progressions modulo prime powers},  Mathematika, 62 (2016), 898--908.

\bibitem{KPo2} N. Khaochim and P. Pongsriiam, \emph{On the order of appearance of products of Fibonacci numbers}, Contrib. Discrete Math.,
 13(2) (2018), 45--62.

\bibitem{KTu} S. Knapowski and P. Turan, \emph{Comparative prime number theory VIII}, Acta Math. Acad. Sci. Hung., 14 (1963), 251--268.  

\bibitem{KTu1} S. Knapowski and P. Turan, \emph{Further developments in the comparative prime number theory VI}, Acta Arith., 12 (1966), 85--96.

\bibitem{KTu2} S. Knapowski and P. Turan, \emph{Further developments in the comparative prime number theory VII}, Acta Arith., 21 (1972), 193--201.

\bibitem{LSZ} K. Liu, I. Shparlinski, and T. Zhang, \emph{Divisor problem in arithmetic progressions modulo a prime power}, Adv. Math., 325 (2018), 459--481.

\bibitem{Luca} F. Luca, \emph{Arithmetic functions of Fibonacci numbers}, Fibonacci Quart., 37(3) (1999), 265--268.

\bibitem{Mart} G. Martin, \emph{The smallest solution of $\phi(3n+1)<\phi(30n)$ is $\ldots$}, Amer. Math. Monthly, 106 (1999), 449--451.

\bibitem{Meng2} X. Meng, \emph{Number of prime factors over arithmetic progressions}, Q. J. Math., 71(1) (2020), 97--121.

\bibitem{New} D. J. Newman, \emph{Euler's $\phi$ function on arithmetic progressions},  Amer. Math. Monthly, 104 (1997), 256--257.

\bibitem{OPo2020} K. Onphaeng and P. Pongsriiam, \emph{Exact divisibility by powers of the integers in the Lucas sequence of the first kind}, AIMS Math., 5(6) (2020), 6739--6748. 

\bibitem{OPo2021} K. Onphaeng and P. Pongsriiam, \emph{Exact divisibility by powers of the integers in the Lucas sequences of the first and second kinds}, AIMS Math., 6(11) (2021), 11733--11748. 

\bibitem{PPo2} P. Phunphayap and P. Pongsriiam, \emph{Reciprocal sum of palindromes}, J. Integer Seq., 22(8) (2019), Article 19.8.6

\bibitem{PPo2020} P. Phunphayap and P. Pongsriiam, \emph{Explicit formulas for the $p$-adic valuations of Fibonomial coefficients II}, 
AIMS Math., 5(6) (2020), 5685--5699.

\bibitem{Pon8} P. Pongsriiam, \emph{A complete formula for the order of appearance of the powers of Lucas numbers}, Commun. Korean Math. Soc., 31(3) (2016), 447--450.

\bibitem{Pon20} P. Pongsriiam, \emph{The order of appearance of factorials in the Fibonacci sequence and certain Diophantine equations}, Period. Math. Hungar., 79(2) (2019), 141--156.

\bibitem{Pon16} P. Pongsriiam, \emph{Fibonacci and Lucas numbers which have exactly three prime factors and some unique properties of $F_{18}$ and $L_{18}$}, Fibonacci Quart., 57(5) (2019), 130--144. 

\bibitem{Pon2021long} P. Pongsriiam, \emph{Longest arithmetic progressions of palindromes}, J. Number Theory, 222 (2021), 362--375.

\bibitem{PSu} P. Pongsriiam and K. Subwattanachai, \emph{Exact formulas for the number of palindromes up to a given positive integer}, Int. J. Math. Comput. Sci., 14(1) (2019), 27--46.

\bibitem{Pon10} P. Pongsriiam and R. C. Vaughan, \emph{The divisor function on residue classes I}, Acta Arith., 168(4) (2015), 369--381.

\bibitem{Pon3} P. Pongsriiam and R. C. Vaughan, \emph{The divisor function on residue classes II}, Acta Arith., 182(2) (2018), 133--181.

\bibitem{PVa} P. Pongsriiam and R. C. Vaughan, \emph{The divisor function on residue classes III}, Ramanujan J., accepted, online first version available at 
https://link.springer.com/article/10.1007/s11139-020-00288-5  



\end{thebibliography}
\end{document}